\newlist{nenumerate}{enumerate}{3}
\setlist[nenumerate]{label=\arabic*., leftmargin=*}
\newlist{uenumerate}{enumerate}{3}
\setlist[uenumerate]{label=\arabic*., wide, labelwidth=!, labelindent=0pt}
\definecolor{shadecolor}{gray}{0.9}
\definecolor{theoremcolor}{rgb}{0.94, 0.97, 1.0}  %
\newmdtheoremenv{theorem}{Theorem}
\theoremstyle{plain}
\newtheorem{theorem}{Theorem}
\newtheorem{lemma}{Lemma}[theorem]
\newtheorem{proposition}[theorem]{Proposition}
\theoremstyle{definition}
\newtheorem{definition}{Definition}
\newtheorem{example}{Example}
\renewenvironment{proof}{\noindent\textbf{Proof.}\hspace*{.3em}}{\qed\\}
\newenvironment{proof-sketch}{\noindent\textbf{Proof Sketch}
\hspace*{1em}}{\qed\bigskip\\}
\newenvironment{proof-idea}{\noindent\textbf{Proof Idea}
\hspace*{1em}}{\qed\bigskip\\}
\newenvironment{proof-attempt}{\noindent\textbf{Proof Attempt}
\hspace*{1em}}{\qed\bigskip\\}
\theoremstyle{remark}
\theoremstyle{remark}
\numberwithin{equation}{section}
\newcommand\BeraMonottfamily{%
	\def\fvm@Scale{0.85}%
	\fontfamily{fvm}\selectfont%
}
\lstdefinestyle{mystyle}{
	commentstyle=\color{OliveGreen},
	numberstyle=\tiny\color{black!60},
	stringstyle=\color{BrickRed},
	basicstyle=\BeraMonottfamily\normalsize,
	breakatwhitespace=false,
	breaklines=true,
	captionpos=b,
	keepspaces=true,
	numbers=none,
	numbersep=5pt,
	showspaces=false,
	showstringspaces=false,
	showtabs=false,
	tabsize=2,
	frame=lines
}
\tikzset{
	invisible/.style={opacity=0},
	visible on/.style={alt=#1{}{invisible}},
	alt/.code args={<#1>#2#3}{%
			\alt<#1>{\pgfkeysalso{#2}}{\pgfkeysalso{#3}}
		},
}
\definecolor{hexcolor0xbfbfbf}{rgb}{0.749,0.749,0.749}
\tikzset{>=latex}
\tikzstyle{none}   = [inner sep=0pt]
\tikzstyle{line}   = [ -, thick, shorten <=1pt, shorten >=1pt ]
\tikzstyle{arrow}  = [ ->, thick, shorten <=1pt, shorten >=1pt ]
\tikzstyle{ardash} = [ dashed, ->, thick, shorten <=1pt, shorten >=1pt ]
\tikzstyle{empty}=[circle,opacity=0.0,text opacity=1.0,inner sep=0pt]
\tikzstyle{box}=[rectangle,fill=White,draw=Black]
\tikzstyle{filled}=[circle,thick,fill=hexcolor0xbfbfbf,draw=Black]
\tikzstyle{hollow}=[circle,thick,fill=White,draw=Black]
\tikzstyle{param}=[rectangle,fill=Black,draw=Black,inner sep=0pt,minimum
\tikzstyle{paramhollow}=[rectangle,thick,fill=White,draw=Black,inner sep=0pt,minimum
\pgfplotsset{compat=newest}
\pgfplotsset{plot coordinates/math parser=false}
\newlength\figureheight
\newlength\figurewidth
\newlength\figureheightsmall
\newlength\figurewidthsmall
\definecolor{POSTcolor}{rgb}{0.48, 0.20, 0.58} %
\definecolor{Qcolor}{rgb}{0.00, 0.53, 0.22} %
\DeclareRobustCommand{\frac}[3][0pt]{%
  {\begingroup\hspace{#1}#2\hspace{#1}\endgroup\over\hspace{#1}#3\hspace{#1}}}
\newacronym{WLOG}{wlog}{without loss of generality}
\newacronym{LHS}{lhs}{left hand side}
\newacronym{RHS}{rhs}{right hand side}
\newacronym{PMF}{pmf}{Probability mass function}
\newacronym{PDF}{pdf}{Probability density function}
\newacronym{VI}{vi}{variational inference}
\newacronym{KL}{kl}{Kullback-Leibler}
\newacronym{RL}{rl}{reinforcement learning}
\newacronym{ELBO}{elbo}{\emph{evidence lower bound}}
\newacronym{MCMC}{mcmc}{Markov chain Monte Carlo}
\newacronym{OT}{ot}{optimal transport}
\renewcommand{\d}[1]{\ensuremath{\operatorname{d}\!{#1}}}
\newcommand{\DP}[2]{{\left\langle #1, #2\right\rangle}}
\def\tinycitep*#1{{\tiny\citep*{#1}}}
\def\tinycitealt*#1{{\tiny\citealt*{#1}}}
\def\tinycite*#1{{\tiny\cite*{#1}}}
\def\smallcitep*#1{{\scriptsize\citep*{#1}}}
\def\smallcitealt*#1{{\scriptsize\citealt*{#1}}}
\def\smallcite*#1{{\scriptsize\cite*{#1}}}
\newcommand{\defeq}{\vcentcolon=}
\DeclareMathOperator*{\argmin}{\mathsf{argmin}}
\def\R{\mathbbm{R}}
\def\<{\left\langle} %
\def\>{\right\rangle}
\def\HH{\mathsf{H}}
\def\HHr{\HH^{\textsc{r}}}
\newcommandtwoopt{\LD}[4][\lambda][\varphi]{\ensuremath{\mathbf{L}_{#1, #2}\left[
#3 : #4 \right] }}
\newacronym{MD}{MD}{mirror descent}
\newacronym{LD}{LD}{logarithmic divergence}
\newacronym{MAP}{MAP}{maximum a-posteriori}
\newacronym{MLE}{MLE}{maximum likelihood estimation}
\newacronym{GMT}{GMT}{gumbel-max trick}
\newacronym{GST}{GST}{gumbel-softmax trick}
\newacronym{STGS}{stgs}{straight-through gumbel-softmax}
\newacronym{FOC}{foc}{first-order optimality condition}
\newacronym{PL}{pl}{Polyak-L\"{o}jasiewicz}
\newacronym{CRT}{crt}{Cram\'{e}r-Rao theorem}
\newacronym{ODE}{ode}{ordinary differential equation}
\newacronym{MF}{mf}{mirror flow}
\newacronym{MDA}{mda}{mirror descent algorithm}
\newacronym{MDD}{mdd}{mirror descent dynamic}
\newacronym{CMF}{cmf}{conformal mirror flow}
\newacronym{CMD}{cmd}{conformal mirror descent}
\newacronym{CBD}{cbd}{conformal Bregman descent}
\newacronym{NGD}{ngd}{natural gradient descent}
\newacronym{NGF}{ngf}{natural gradient flow}
\newacronym{IID}{iid}{indepedent and identically distributed}
\newacronym{JSD}{JSD}{Jensen-Shannon divergence}
\newacronym{RD}{RD}{R\'{e}nyi divergence}
\newacronym{MSE}{MSE}{mean squared error}
\newacronym{FY}{FY}{Fenchel-Young}
\def\p{\theta}
\def\d{\eta}
\def\x{x}
\begin{document}

\title[Conformal Mirror Descent]{Conformal mirror descent with logarithmic divergences}

\author[Kainth]{Amanjit Singh Kainth}
\address{Department of Computer Science, University of Toronto}
\email{amanjitsk@cs.toronto.edu}

\author[Wong]{Ting-Kam Leonard Wong}
\address{Department of Statistical Sciences, University of Toronto}
\email{tkl.wong@utoronto.ca}

\author[Rudzicz]{Frank Rudzicz}
\address{Department of Computer Science, University of Toronto}
\email{frank@spoclab.com}

\begin{abstract}
The logarithmic divergence is an extension of the Bregman divergence
motivated by optimal transport and a generalized convex duality, and satisfies
many remarkable properties. Using the geometry induced by the logarithmic
divergence, we introduce a generalization of continuous time mirror descent that
we term the conformal mirror descent. We derive its dynamics under a generalized
mirror map, and show that it is a time change of a corresponding Hessian
gradient flow. We also prove convergence results in continuous time. We apply
the conformal mirror descent to online estimation of a generalized exponential
family, and construct a family of gradient flows on the unit simplex via the
Dirichlet optimal transport problem.
\end{abstract}

\keywords{mirror descent, gradient flow, logarithmic
divergence, conformal Hessian metric, $\lambda$-duality, $\lambda$-exponential
family}

\maketitle

\section{Introduction}%
\label{sec:introduction}

Information geometry provides not only powerful tools for studying spaces of
probability distributions, but also a wide range of geometric structures that are
useful for various challenges in data science \cite{an00, a16, ajvs17}. The Bregman
divergence \cite{b67} plays a key role in the theory and application of
information geometry. It is the canonical divergence of the dually flat geometry
\cite{na82} which arises naturally in exponential families \cite{bmdg05}, and
can serve as a loss function in statistical estimation and optimal control
\cite{EK22}. The Bregman divergence is especially tractable in applied settings,
as it is closely connected to convex duality and satisfies a generalized
Pythagorean theorem which greatly simplifies the analysis of Bregman
projections. Among the many applications of Bregman divergences, we mention
clustering \cite{bmdg05}, exponential family principal component analysis
\cite{cds02} as well as boosting and logistic regression \cite{css02,
murata2004information}.

We present a generalization of {\it mirror descent}
\cite{NY83,am03} which is a popular first-order iterative optimization
algorithm. Mirror descent is a gradient descent algorithm where a Bregman
divergence serves as a proximal function. A suitable convex generating function
may be chosen to exploit the geometry of the problem. The update step
\eqref{eqn:MD.update} of mirror descent involves a change of coordinates using
the so-called {\it mirror map} which corresponds to the information-geometric
dual parameter. In the continuous time limit, mirror descent can be represented
as a Riemannian gradient flow with respect to the Hessian metric induced by the
given Bregman divergence \cite{ABB04, rm15}. The basic ideas are reviewed in
Sections \ref{sec:convex.duality} and
\ref{sub:mirror_descent_and_riemannian_gradient_flow}.

Our generalization, termed the {\it conformal mirror descent}, is based on the
theory of {\it logarithmic divergences}  \cite{pw16, pw18, w18,w19,WZ21}. In
many senses, the logarithmic divergence may be regarded as a canonical
deformation of the Bregman divergence. Just as the Bregman divergence captures
the dually flat geometry, the logarithmic divergence is a canonical divergence
for a dually projectively flat statistical manifold with {\it constant} nonzero
sectional curvature, and also satisfies a generalized Pythagorean theorem
\cite{w18}. Moreover, the logarithmic divergence under divisive normalization
leads to a deformed exponential family, which is closely related to the
$q$-exponential family in statistical physics \cite{naudts2011generalised},
while recovering natural analogues of intrinsic information-geometric properties
of the exponential family in the deformed case \cite{w18, WZ21}. For example,
the Kullback-Leibler (KL) divergence (which is the Bregman divergence of the
cumulant generating function) becomes the R\'{e}nyi divergence, and the dual
variable can be interpreted as an escort expectation. Another appealing property
is that the logarithmic divergence is associated with a generalized convex
duality motivated by optimal transport \cite{v03, v08}. Following \cite{WZ21},
we call it the {\it $\lambda$-duality}, where $\lambda \neq 0$ is the curvature
parameter. It was recently shown \cite{WY19B} that  the dualistic geometry in
information geometry can be naturally embedded in the pseudo-Riemannian geometry
of optimal transport \cite{km10} using the framework of $c$-divergence, under
which divergences are induced by optimal transport maps. Bregman and logarithmic
divergences are special cases corresponding to particular cost functions
\cite{pw18, w18}. In Section \ref{sub:logarithmic_divergences}, we review
properties of $\lambda$-duality and logarithmic divergences that are needed in
this paper. Further results about $\lambda$-duality and its relation with convex
duality can be found in \cite{zhang2022lambda}.

In Section \ref{sec:logarithmic_mirror_descent}, we formulate the conformal
mirror descent in continuous time as a Riemannian gradient flow, where the
underlying metric is induced by a logarithmic divergence. We call it the {\it
conformal mirror descent} because the metric can be shown to be a conformal
transformation of a Hessian metric. This implies that the conformal mirror
descent is, in continuous time, a time change of mirror descent. We also derive
explicit dynamics of the gradient flow under the $\lambda$-mirror map
corresponding to the logarithmic divergence and related convergence results. The
$\lambda$-duality suggests many new generating functions that are potentially
useful in various applications.

We give two applications to demonstrate the utility of our conformal mirror
descent. In Section
\ref{sub:online_parameter_estimation_for_generalized_exponential_family}, we
consider online estimation of the $\lambda$-exponential family introduced in
\cite{WZ21}, and derive an elegant online natural gradient update which
generalizes the one for the exponential family \cite{rm15}. {\it Dirichlet
optimal transport} on the unit simplex \cite{pw16, pw18, pw18b} is one of the
original motivations of the theory of logarithmic divergences (and corresponds to
the case $\lambda = -1$). Expressing the $(-1)$-mirror map in terms of the
Dirichlet optimal transport map, we derive in Section \ref{sub:Dirichlet}, an
interesting family of gradient flows on the unit simplex.

Finally, in Section \ref{sec:conclusion} we discuss our contributions in the
context of related literature, and propose several directions for future
research.

{\it Notation}: We use superscripts to denote components of vectors, e.g.,
$\theta = (\theta^1, \ldots, \theta^d)$. In computations, we regard $\theta$ as
a column vector and write $\theta = \begin{bmatrix} \theta^1 & \cdots
& \theta^d\end{bmatrix}^{\top}$, where $\top$ denotes transposition. The
Euclidean gradient $\nabla f(\theta) = \nabla_{\theta} f(\theta)$ of
a real-valued function $f$ is also regarded as a column vector. Due to the
difficulty of unifying notations in different settings, in this paper we do {\it
not} adopt the Einstein summation convention.

\section{From convex duality to $\lambda$-duality} \label{sec:background}
\subsection{Convex duality and Bregman divergence} \label{sec:convex.duality}
We begin by reviewing convex duality and Bregman divergence, which are at the
core of classical information geometry \cite{an00, a16} (also see
\cite{amari2021information} for a recent overview). Let $\phi$ be a lower
semicontinuous convex function on $\mathbbm{R}^d$. Its convex conjugate is
defined by $\phi^*(y) = \sup_{x \in \mathbbm{R}^d} \left\{\DP{x}{y}
- \phi(x) \right\}$, where $\DP{\cdot}{\cdot}$ denotes the Euclidean inner
product. Then $\phi^*$ is also lower semi-continuous and convex, and we have
$\phi^{**} = (\phi^*)^* = \phi$. From the definition of $\phi^*$, for any $x, y \in
\R^d$ we have
\begin{equation} \label{eqn:Fenchel.inequality}
\phi(x) + \phi^*(y) - \DP{x}{y} \geq 0,
\end{equation}
and equality holds if and only if $y$ is a subgradient of $\phi$ at $x$.

Let $\Theta \subset \mathbbm{R}^d$ be an open convex set and let $\phi: \Theta
\rightarrow \mathbbm{R}$ be a smooth convex function whose Hessian $\nabla^2
\phi(\theta)$ is everywhere positive definite. We call such a $\phi$ a {\it
Bregman generator}. The {\it Bregman divergence} of $\phi$, regarded as
a generalized distance, is defined for $\theta, \theta' \in \Theta$ by
\begin{equation} \label{eqn:phi.Bregman}
{\bf B}_{\phi}[\theta : \theta'] = \left(\phi(\theta) - \phi(\theta')\right)
- \DP{\nabla \phi(\theta')}{\theta - \theta'}.
\end{equation}
Under the stated conditions, $\nabla \phi$ is a diffeomorphism from $\Theta$
onto its range. We call $\theta$ the {\it primal variable} and $\zeta = \nabla
\phi(\theta)$ the {\it dual variable}.\footnote{We reserve the symbol $\eta$ for
the dual variable under the $\lambda$-duality; see
\eqref{eqn:L.alpha.mirror.map}.} The inverse transformation is given by $\theta
    = \nabla \phi^*(\zeta)$. Then, we may express
\eqref{eqn:phi.Bregman} in {\it self-dual form} by
\begin{equation} \label{eqn:Bregman.self.dual}
\begin{split}
{\bf B}_{\phi}[\theta: \theta'] = {\bf B}_{\phi^*}[\zeta' : \zeta] = \phi(\theta)
+ \phi^*(\zeta') - \DP{\theta}{\zeta'},
\end{split}  %
\end{equation}
which is closely related to the Fenchel-Young inequality
\eqref{eqn:Fenchel.inequality}.

Conjugation, which characterizes convex duality, is defined in terms of the
pairing function $c(x, y) = -\DP{x}{y}$. It turns out that much of the above can
be generalized. For a general $c$, called the cost function in the context of
optimal transport \cite{v03, v08}, we can define the $c$-conjugate of a function
$\varphi$ by $\varphi^{(c)}(y) = \sup_{x} \{ -c(x, y) - \varphi(x) \}$.
A function $\varphi$ is said to be $c$-convex if it is the $c$-conjugate of some
function $\varphi$, i.e., $\varphi = \varphi^{(c)}$. We have the following
analogue of the Fenchel-Young inequality  \eqref{eqn:Fenchel.inequality}:
\begin{equation} \label{eqn:c.FY.inequality}
\varphi(x) + \varphi^{(c)}(y) + c(x, y) \geq 0.
\end{equation}
If equality holds, we call $y$ a $c$-subgradient of $\phi$ at $x$. If this $y$
is unique, we call it the $c$-gradient. Under suitable conditions,
a Monge-Kantorovich optimal transport problem can be solved by an optimal
transport map, which can be expressed as the $c$-gradient of some $c$-convex
potential $\varphi$. The inequality~\eqref{eqn:c.FY.inequality} can be used to
define a {\it $c$-divergence} on the graph of the optimal transport map
\cite{WY19B}. The $\lambda$-duality \cite{WZ21} is the generalized convex
duality based on the {\it logarithmic cost}
\begin{equation} \label{eqn:log.cost}
c_{\lambda}(x, y) = \frac{-1}{\lambda} \log(1 + \lambda \DP{x}{y}), \quad \lambda \neq 0,
\end{equation}
which is mathematically tractable and has remarkable properties.\footnote{In our
applications $x$ and $y$ only vary in respective domains such that $1 + \lambda
\DP{x}{y} > 0$, so the logarithm in \eqref{eqn:log.cost} is well defined.} We
recover the usual convex duality in the limiting case $ \lim_{\lambda \to 0}
c_{\lambda}(x, y) = - \DP{x}{y}$.

\subsection{Mirror descent} \label{sub:mirror_descent_and_riemannian_gradient_flow}
Consider the minimization problem $\min_{\theta \in \Theta} f(\theta)$ where $f: \Theta
\rightarrow \mathbbm{R}$ is assumed to be differentiable.
Let $\phi : \Theta \rightarrow \mathbbm{R}$ be a Bregman generator as in Section
\ref{sec:convex.duality}. It induces the {\it mirror map} $\zeta
= \nabla_{\theta} \phi(\theta)$. For clarity, we use $\nabla_{\theta}$ to
indicate that the gradient is taken with respect to $\theta$. The mirror descent
algorithm minimizes $f$ by iterating the update
\begin{equation} \label{eqn:MD.update}
\zeta_{k+1} = \zeta_k - \delta \nabla_{\theta} f(\theta_k),
\end{equation}
where $\delta = \delta_k > 0$ is the learning rate which may depend on $k$. We
obtain $\theta_{k+1}$ by applying the inverse mirror map, i.e., $\theta_{k+1}
= \nabla_{\zeta} \phi^*(\zeta_{k+1})$. Thus, we require that both $\nabla \phi$
and $\nabla \phi^*$ can be computed for implementing the algorithm. Letting
$\phi(\theta) = \frac{1}{2}|\theta|^2 = \frac{1}{2} \DP{\theta}{\theta}$
recovers Euclidean gradient descent since in this case, $\zeta = \nabla_{\theta}
\frac{1}{2}|\theta|^2 = \theta$. In general,~\eqref{eqn:MD.update} requires an
extra projection step when the right hand side is outside $\Theta$. The
(unconstrained) update \eqref{eqn:MD.update} is equivalent to the update of
a {\it Bregman proximal method}, namely
\begin{equation} \label{eqn:MD.update.equiv}
\theta_{k+1} = \argmin_{\theta \in \Theta} \left\{ f(\theta_k) +  \DP{\nabla_{\theta}
 f(\theta_k)}{\theta - \theta_k} + \frac{1}{\delta} {\bf B}_{\phi}[\theta : \theta_k]
 \right\}.
\end{equation}
It is easy to verify that the first order condition of
\eqref{eqn:MD.update.equiv} can be expressed as \eqref{eqn:MD.update}.
Geometrically, $\theta_{k+1}$ minimizes a linear approximation of $f$  over
a Bregman ball based at $\theta_k$.

Further insights can be obtained by studying the continuous time limit as done
in \cite{rm15} and \cite{GWS21}. The Bregman divergence admits the quadratic
approximation
\begin{equation} \label{eqn:Bregman.Riemannian}
{\bf B}_{\phi}[ \theta + \Delta \theta : \theta] = \frac{1}{2} (\Delta
\theta)^{\top} G_0(\theta) (\Delta \theta) + O(|\Delta \theta|^3),
\end{equation}
where $G_0(\theta) = \nabla_{\theta}^2 \phi(\theta)$ is a {\it Hessian}
Riemannian metric (when expressed under the primal $\theta$-coordinates) and
induces the {\it Riemannian gradient} $\mathrm{grad}_{G_0} f = G_0^{-1}
\nabla_{\theta} f$. See \cite{shima2007geometry} for an in-depth geometric study
of Hessian manifolds. Letting $\delta \rightarrow 0$ in \eqref{eqn:MD.update} or
\eqref{eqn:MD.update.equiv} and scaling time appropriately, one obtains a {\it
Hessian Riemannian gradient flow} \cite{ABB04}:
\begin{equation} \label{eqn:MD.flow}
\begin{split}
\dv{t} \theta_t = -\mathrm{grad}_{G_0} f(\theta_t), \quad \text{or equivalently}
\quad  \dv{t} \zeta_t = -\nabla_{\theta} f(\theta_t).
\end{split}
\end{equation}
Naturally, one may consider other metrics to obtain generalizations of mirror
descent (see \cite{GWS21} for a discussion). In this paper, we use the
Riemannian metric induced by the logarithmic divergence, which is particularly
tractable.

\subsection{$\lambda$-duality and logarithmic divergence}
\label{sub:logarithmic_divergences}
Following the treatment of \cite{WZ21}, we introduce the $\lambda$-duality which
utilizes the logarithmic cost function $c_{\lambda}$ defined by
\eqref{eqn:log.cost}. In general, $c$-convex functions and $c$-gradients are
difficult to characterize explicitly. Remarkably, for the logarithmic cost
function $c_{\lambda}$, it is possible to relate $c_{\lambda}$-convex with usual
convexity and express the $c_{\lambda}$-gradient in terms of the usual gradient.
The following definition summarizes the generalized convexity notion and the
required regularity conditions needed for our applications. Throughout, we let
$\lambda \neq 0$ be a fixed constant.

\begin{definition} [Regular $c_{\lambda}$-convex function and
  $c_{\lambda}$-gradient] \label{def:regular.c.lambda}
Let $\Theta \subset \R^d$ be an open convex set. A smooth function $\varphi:
\Theta \rightarrow \mathbbm{R}$ is said to be regular $c_{\lambda}$-convex if
$\Phi_{\lambda} = \frac{1}{\lambda} (e^{\lambda \varphi} - 1)$ is convex,
$\nabla_{\theta}^2 \Phi_{\lambda}$ is positive definite and $1 - \lambda
\DP{\nabla_{\theta} \varphi(\theta)}{\theta} > 0$. Given such a function
$\varphi$, we define its $c_{\lambda}$-gradient by
\begin{equation} \label{eqn:L.alpha.mirror.map}
\nabla^{(\lambda)}_{\theta} \varphi(\theta) = \frac{1}{1 - \lambda
\DP{\nabla_{\theta} \varphi(\theta)}{\theta}} \nabla_{\theta}
\varphi(\theta).
\end{equation}
\end{definition}

We also call $\nabla^{(\lambda)}_{\theta} \varphi$ the {\it $\lambda$-mirror
map}. Under the stated conditions, it can be shown that
$\nabla^{(\lambda)}_{\theta} \varphi$ is a diffeomorphism from $\Theta$ onto its
range $H$;\footnote{$H$ is the uppercase of the Greek letter $\eta$.} we call
$\eta = \nabla_{\theta}^{(\lambda)} \varphi(\theta)$ the dual variable in this
context.

\begin{table}[t!]
\begin{tabular}{c|c|c|c}
$\lambda$ range & $\Theta$ & $\varphi(\theta)$ & $\eta = \nabla^{(\lambda)}
\varphi(\theta)$ \\
\hline
$(-2, \infty)$ & $(0, \infty)$ & $-\frac{1}{2} \log \theta$ & $\frac{-1}{2
+ \lambda} \frac{1}{\theta}$ \\
$(0, \infty)$ & $(-\infty, \frac{1}{\lambda})$ & $\theta$ & $\frac{1}{1
- \lambda \theta}$ \\
$\mathbbm{R} \setminus \{0\}$ & $(\frac{-1}{\sqrt{|\lambda|}},
\frac{1}{\sqrt{|\lambda|}})$ & $\frac{1}{2}\theta^2$ & $\frac{\theta}{1
- \lambda \theta^2}$ %
\end{tabular}
\caption{Examples of regular $c_{\lambda}$-convex functions on the real line and
their corresponding $\lambda$-mirror maps.}\label{tab:examples}
\end{table}

In a nutshell, instead of convex functions, we use functions $\varphi$ such that
$\Phi_{\lambda} = \frac{1}{\lambda} (e^{\lambda \varphi} - 1)$ are convex, and
replace the usual gradient by the $\lambda$-mirror map. Some examples of regular
$c_{\lambda}$-convex functions are given in Table \ref{tab:examples}.

Henceforth we let $\varphi$ be a regular $c_{\lambda}$-convex function. Let
$\psi$ be the {\it $c_{\lambda}$-conjugate} defined by
\[
\psi(\eta) = \sup_{\theta \in \Theta} \left\{ -c_{\lambda}(\theta, \eta)
- \varphi(\theta) \right\}.
\]
Then, for $\theta \in \Theta$ we have
\[
\varphi(\theta) = \sup_{\eta \in H} \left\{ -c_{\lambda}(\theta, \eta)
- \psi(\eta) \right\}.
\]
Moreover, $1 + \lambda \DP{\theta}{\eta'} > 0$ for any $\theta \in \Theta$ and
$\eta' \in H$, and for $\eta = \nabla^{(\lambda)} \varphi(\theta)$ we have the
identity
\begin{equation} \label{eqn:c.Fenchel.Young.equality}
\varphi(\theta) + \psi(\eta) + c_{\lambda}(\theta, \eta) = 0.
\end{equation}
Thus $\varphi$ and $\psi$ satisfy a generalized Legendre-like duality with
respect to the cost function $c_{\lambda}$. The inverse $\lambda$-mirror map is
given by $\theta = \nabla_{\eta}^{(\lambda)} \psi(\eta)$.

We use $\varphi$ to define a {\it $\lambda$-logarithmic divergence} which is
different from the Bregman divergence. For completeness, we review the main
idea. Recall that $\Phi_{\lambda} = \frac{1}{\lambda} (e^{\lambda \varphi} - 1)$
is (strictly) convex. By convexity, for $\theta, \theta' \in \Theta$ we have
\[
\Phi(\theta') +   \DP{\nabla \Phi(\theta')}{\theta - \theta'} \leq \Phi(\theta).
\]
Expressing this inequality in terms of $\varphi$, we have, using the chain rule,
\begin{equation} \label{eqn:L.lambda.derive.1}
\begin{split}
&\quad \frac{1}{\lambda} e^{\lambda \varphi(\theta')} + e^{\lambda
\varphi(\theta')} \nabla \varphi(\theta') \cdot (\theta - \theta') \leq
\frac{1}{\lambda} e^{\lambda \varphi(\theta)}\\
&\Rightarrow \frac{1}{\lambda}(1 + \lambda \nabla \varphi(\theta') \cdot (\theta
- \theta')) \leq \frac{1}{\lambda} e^{\lambda (\varphi(\theta)
- \varphi(\theta')}.
\end{split}
\end{equation}
Now there are two cases depending on the sign of $\lambda$, but the resulting
expression is the same. Here, we consider the case $\lambda < 0$ and the other
case is similar. From \eqref{eqn:L.lambda.derive.1}, we have
\begin{equation} \label{eqn:L.lambda.derive.2}
\begin{split}
&\quad 1 + \lambda \nabla \varphi(\theta') \cdot (\theta - \theta')
\geq e^{\lambda(\varphi(\theta) - \varphi(\theta')} \\
&\Rightarrow \varphi(\theta') + \frac{1}{\lambda} \log(1 + \lambda \nabla
\varphi(\theta') \cdot (\theta - \theta')) \leq \varphi(\theta).
\end{split}
\end{equation}
Taking the difference yields the $\lambda$-logarithmic divergence. When
$\varphi$ is convex, letting $\lambda \rightarrow 0$
in~\eqref{eqn:L.lambda.derive.2} recovers the Bregman divergence (see Figure
\ref{fig:L.div.example}).

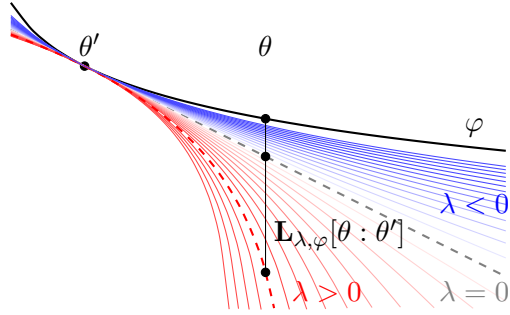
\begin{figure}[t!]
\centering
\begin{tikzpicture}[scale = 1.4]
\draw[domain = 0.3:5, thick, smooth, variable = \x] plot ({\x}, {-0.5*ln(\x)});
\draw[black, fill = black, radius = 0.04] (1, 0) circle;
\draw[domain = 0.3:5, gray, dashed, thick, smooth, variable = \x] plot ({\x},
  {-0.5*(\x - 1)});

\draw[domain = 0.3:5, blue!10, smooth, variable = \x] plot ({\x}, {-ln(1 + 0.1*0.5*(\x - 1))/0.1});
\draw[domain = 0.3:5, blue!18, smooth, variable = \x] plot ({\x}, {-ln(1 + 0.2*0.5*(\x - 1))/0.2});
\draw[domain = 0.3:5, blue!26, smooth, variable = \x] plot ({\x}, {-ln(1 + 0.3*0.5*(\x - 1))/0.3});
\draw[domain = 0.3:5, blue!32, smooth, variable = \x] plot ({\x}, {-ln(1 + 0.4*0.5*(\x - 1))/0.4});
\draw[domain = 0.3:5, blue!40, smooth, variable = \x] plot ({\x}, {-ln(1 + 0.5*0.5*(\x - 1))/0.5});
\draw[domain = 0.3:5, blue!48, smooth, variable = \x] plot ({\x}, {-ln(1 + 0.6*0.5*(\x - 1))/0.6});
\draw[domain = 0.3:5, blue!56, smooth, variable = \x] plot ({\x}, {-ln(1 + 0.7*0.5*(\x - 1))/0.7});
\draw[domain = 0.3:5, blue!64, smooth, variable = \x] plot ({\x}, {-ln(1 + 0.8*0.5*(\x - 1))/0.8});
\draw[domain = 0.3:5, blue!72, smooth, variable = \x] plot ({\x}, {-ln(1 + 0.9*0.5*(\x - 1))/0.9});
\draw[domain = 0.3:5, blue!72, smooth, variable = \x] plot ({\x}, {-ln(1 + 1.1*0.5*(\x - 1))/1.1});
\draw[domain = 0.3:5, blue!72, smooth, variable = \x] plot ({\x}, {-ln(1 + 1.2*0.5*(\x - 1))/1.2});
\draw[domain = 0.3:5, blue!72, smooth, variable = \x] plot ({\x}, {-ln(1 + 1.3*0.5*(\x - 1))/1.3});
\draw[domain = 0.3:5, blue!72, smooth, variable = \x] plot ({\x}, {-ln(1 + 1.4*0.5*(\x - 1))/1.4});

\draw[domain = 0.3:5, red!10, smooth, variable = \x] plot ({\x}, {ln(1 - 0.1*0.5*(\x - 1))/0.1});
\draw[domain = 0.3:4.69, red!18, smooth, variable = \x] plot ({\x}, {ln(1 - 0.2*0.5*(\x - 1))/0.2});
\draw[domain = 0.3:4.32, red!26, smooth, variable = \x] plot ({\x}, {ln(1 - 0.3*0.5*(\x - 1))/0.3});
\draw[domain = 0.3:4, red!32, smooth, variable = \x] plot ({\x}, {ln(1 - 0.4*0.5*(\x - 1))/0.4});
\draw[domain = 0.3:3.73, red!40, smooth, variable = \x] plot ({\x}, {ln(1 - 0.5*0.5*(\x - 1))/0.5});
\draw[domain = 0.3:3.495, red!48, smooth, variable = \x] plot ({\x}, {ln(1 - 0.6*0.5*(\x - 1))/0.6});
\draw[domain = 0.3:3.29, red!56, smooth, variable = \x] plot ({\x}, {ln(1 - 0.7*0.5*(\x - 1))/0.7});
\draw[domain = 0.3:3.11, red!64, smooth, variable = \x] plot ({\x}, {ln(1 - 0.8*0.5*(\x - 1))/0.8});
\draw[domain = 0.3:2.945, red!72, smooth, variable = \x] plot ({\x}, {ln(1 - 0.9*0.5*(\x - 1))/0.9});
\draw[domain = 0.3:2.677, red!72, smooth, variable = \x] plot ({\x}, {ln(1 - 1.1*0.5*(\x - 1))/1.1});
\draw[domain = 0.3:2.563, red!72, smooth, variable = \x] plot ({\x}, {ln(1 - 1.2*0.5*(\x - 1))/1.2});
\draw[domain = 0.3:2.462, red!72, smooth, variable = \x] plot ({\x}, {ln(1 - 1.3*0.5*(\x - 1))/1.3});
\draw[domain = 0.3:2.373, red!72, smooth, variable = \x] plot ({\x}, {ln(1 - 1.4*0.5*(\x - 1))/1.4});

\draw[domain = 0.3:2.8, red, dashed, thick, smooth, variable = \x] plot ({\x}, {ln(1 - 0.5*(\x - 1))});  %
\draw[domain = 0.3:5, blue!72, smooth, variable = \x] plot ({\x}, {-ln(1 + 0.5*(\x - 1))});  %

\draw[black, fill = black, radius = 0.04] (2.7183, -0.5) circle;  %
\draw[black, fill = black, radius = 0.04] (2.7183, -0.859) circle;  %
\draw[black, fill = black, radius = 0.04] (2.7183, -1.96) circle;  %
\draw[black] (2.7183, -1.96) -- (2.7183, -0.5);
\node[black, right] at (2.7, -1.6) {${\bf L}_{\lambda, \varphi}[\theta : \theta']$};

\node[gray, below] at (4.7, -1.95) {$\lambda = 0$};
\node[blue, above] at (4.7, -1.5) {$\lambda < 0$};
\node[red, below] at (3.3, -1.95) {$\lambda > 0$};

\node[black, above] at (4.7, -0.77) {$\varphi$};
\node[black, above] at (1.05, 0) {$\theta'$};
\node[black, above] at (2.7183, 0) {$\theta$};
\end{tikzpicture}
\caption{The $\lambda$-logarithmic divergence \eqref{eqn:log.div} is the error
term of a logarithmic first order approximation;
see~\eqref{eqn:L.lambda.derive.2}. We visualize it for $\varphi(\theta)
= \frac{-1}{2} \log \theta$ which is regular $c_{\lambda}$-convex on $(0,
\infty)$ for $\lambda > -2$. The red dashed curve shows the case $\lambda = 1$.}
\label{fig:L.div.example}
\end{figure}

\begin{definition}[$\lambda$-logarithmic divergence] \label{def:log.divergence}
Let $\varphi$ be a regular $c_{\lambda}$-convex function. We define its
$\lambda$-logarithmic divergence for $\theta, \theta' \in \Theta$ by
\begin{equation} \label{eqn:log.div}
{\bf L}_{\lambda, \varphi}[\theta : \theta'] = \varphi(\theta)
- \varphi(\theta') - \frac{1}{\lambda} \log(1 + \lambda \DP{ \nabla
\varphi(\theta')}{(\theta - \theta')}).
\end{equation}
\end{definition}

An important application of the logarithmic divergence is to some generalized
exponential families, where an appropriately defined potential function
$\varphi$ leads to the R\'{e}nyi divergence. See Section
\ref{sub:online_parameter_estimation_for_generalized_exponential_family}, where
we exploit this property in online parameter estimation.

Analogous to \eqref{eqn:Bregman.self.dual}, the $\lambda$-logarithmic divergence
admits the following self-dual representation which verifies that it is the
$c$-divergence of the cost $c_{\lambda}$:
\begin{equation} \label{eqn:log.self.dual}
{\bf L}_{\lambda, \varphi}[\theta : \theta'] = {\bf L}_{\lambda, \psi}[\eta'
: \eta] = \varphi(\theta) + \psi(\eta') - \frac{1}{\lambda} \log (1 + \lambda
\DP{\theta}{\eta'}).
\end{equation}
The logarithmic divergence can be justified by the remarkable properties
satisfied by the induced dualistic geometry $(g, \nabla, \nabla^*)$ \cite{pw18,
w18, WZ21}:
\begin{itemize}
    \item The primal and dual connections $(\nabla, \nabla^*)$ are {\it dually
      projectively flat}. In particular, the primal (resp.~dual) geodesics are
      time-reparameterized straight lines under the primal (resp.~dual)
      coordinate system.
    \item The sectional curvature of $\nabla$ and $\nabla^*$ with respect to $g$
      are everywhere {\it constant} and equal to $\lambda$.
    \item The {\it generalized Pythagorean theorem} holds for the
      $\lambda$-logarithmic divergence.
    \item Given a dualistic structure which is dually projectively projectively
      flat with constant (nonzero) sectional curvature, one can define (locally)
      a $\lambda$-logarithmic divergence which induces the given structure.
      Thus, the $\lambda$-logarithmic divergence can be regarded as a {\it
      canonical divergence}.
\end{itemize}

Letting $\lambda \rightarrow 0$ recovers well-known properties of the dually
flat geometry induced by a Bregman divergence.

To define the conformal mirror descent we will need the explicit form of the
metric $g$. We give two equivalent representations, both of which are useful in
our development. Under the primal coordinate system $\theta$, the coefficients
of $g$ are given by the matrix
\begin{equation} \label{eqn:L.alpha.metric}
G_{\lambda}(\theta) = \nabla^2_{\theta} \varphi(\theta) + \lambda (\nabla_{\theta}
\varphi(\theta))(\nabla_{\theta} \varphi(\theta))^{\top} = e^{-\lambda
\varphi(\theta)} \nabla_{\theta}^2 \Phi_{\lambda}(\theta).
\end{equation}
The first representation states that $G_{\lambda}$ is a rank-one correction of
the Hessian $\nabla^2_{\theta} \varphi$. The second representation states that
$G_{\lambda}$ is a {\it conformal transformation} of the Hessian metric
$\tilde{G}_0 = \nabla_{\theta}^2 \Phi_ {\lambda}$. That is, $G_{\lambda}$ is
a {\it conformal Hessian metric}. Geometrically, a conformal transformation
preserves the angles between tangent vectors but distorts their lengths. We note
that the last expression in \eqref{eqn:L.alpha.metric} may also be understood
via the identity
\begin{equation} \label{eqn:conformal.divergence}
{\bf L}_{\lambda, \varphi}[\theta : \theta'] = \frac{1}{-\lambda} \log \left(1
+ (-\lambda) e^{-\lambda \varphi(\theta)} {\bf B}_{\Phi_{\lambda}}[\theta:
\theta'] \right),
\end{equation}
which can be verified by a direct computation. It states that the
$\lambda$-logarithmic divergence is a monotone transformation of a {\it left
conformal Bregman divergence} \cite{NNA15}. See \cite{wy19} for more discussion
in this direction.

\section{Conformal mirror descent}
\label{sec:logarithmic_mirror_descent}
In this section, we present our first main contribution: a generalization of
continuous time mirror descent as the Riemannian gradient flow with respect to
a $\lambda$-logarithmic divergence. In Section \ref{sub:derivation_of_the_flow},
we define the flow and interpret it in two ways: (i) a mirror-like descent under
the $\lambda$-mirror map \eqref{eqn:L.alpha.mirror.map}, and (ii) a time change
of a Hessian gradient flow. It reduces to the continuous time mirror descent
\eqref{eqn:MD.flow} in the limit $\lambda \rightarrow 0$. Convergence results
are stated and proved in Section \ref{sec:flow.convergence.statements}.

\subsection{The flow and two representations}%
\label{sub:derivation_of_the_flow}
As described in Section \ref{sub:mirror_descent_and_riemannian_gradient_flow},
the usual (Bregman) mirror descent can be understood as (i) a Bregman proximal
method \eqref{eqn:MD.update.equiv}; or (ii) a (discretization of) the Hessian
gradient flow \eqref{eqn:MD.flow}. This suggests two ways to generalize the
method. Formally, we may replace the Bregman divergence in
\eqref{eqn:MD.update.equiv} by a $\lambda$-logarithmic divergence. While the
resulting proximal method is well-defined, unfortunately the first-order
condition cannot be solved explicitly to yield a simple update as in mirror
descent (see \eqref{eqn:MD.update}). We study instead the continuous time
Riemannian gradient flow with respect to the metric $G_{\lambda}$, and it turns
out that this is much more tractable. We fix $\lambda \neq 0$ and let a regular
$c_{\lambda}$-convex generator $\varphi$ be given on the convex domain $\Theta$.
\begin{definition}[Conformal mirror descent in continuous time] \label{def:cmd}
Let $f: \Theta \rightarrow \mathbbm{R}$ be a differentiable function to be
minimized. Given an initial value $\theta_0 \in \Theta$, the continuous time
conformal mirror descent is the Riemannian gradient flow given by
\begin{equation} \label{eqn:L.lambda.flow}
\dv{t} \theta_t = -\mathrm{grad}_{G_{\lambda}} f(\theta_t),
\end{equation}
where $\mathrm{grad}_{G_{\lambda}} f = G_{\lambda}^{-1} \nabla_{\theta}
f$ is the Riemannian gradient with respect to the metric $G_{\lambda}$ in
\eqref{eqn:L.alpha.metric}.
\end{definition}

While any Riemannian metric $G(\theta)$ can be used to define a gradient flow,
implementation of the flow generally requires computation of the Riemannian
gradient $G^{-1} \nabla_{\theta} f$. In \eqref{eqn:MD.flow}, the mirror map
$\nabla \phi$ eliminates the need to compute $G_0^{-1}$ because $G_0 = \nabla^2
\phi$ is the Jacobian of the mirror map. Here, we show that a similar property
holds under the $\lambda$-mirror map. We let $I_d$ denote the $d \times d$
identity matrix.

\begin{theorem}[Dynamics under the $\lambda$-mirror map]  \label{thm:flow.main1}
Consider the flow \eqref{eqn:L.lambda.flow}. Let $\eta_t
= \nabla^{(\lambda)}_{\theta}\varphi(\theta_t)$ be the dual variable under the
$\lambda$-mirror map \eqref{eqn:L.alpha.mirror.map}. Then
\begin{equation} \label{eqn:L.alpha.flow.alternative}
\begin{split}
\dv{}{t}\eta_t = -\Pi_{\lambda}(\theta_t) (I_d + \lambda \eta_t \theta_t^{\top})
\nabla_{\theta} f(\theta_t),
\end{split}
\end{equation}
where $\Pi_{\lambda}(\theta) \defeq 1 + \lambda
\DP{\theta}{\nabla^{(\lambda)}_{\p}\varphi(\p)} = 1 + \lambda\DP{\p}{\d}$.
\end{theorem}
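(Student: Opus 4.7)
The plan is to reduce the ODE for $\eta_t$ to a matrix identity via the chain rule. Since $\eta_t = \nabla^{(\lambda)}_{\theta}\varphi(\theta_t)$, differentiation gives $\dv{t}\eta_t = J(\theta_t)\,\dv{t}\theta_t$, where $J(\theta) \defeq \partial\eta/\partial\theta$ is the Jacobian of the $\lambda$-mirror map. Substituting the flow \eqref{eqn:L.lambda.flow} reduces the theorem to verifying the algebraic identity
\[
J(\theta)\, G_{\lambda}^{-1}(\theta) = \Pi_{\lambda}(\theta)\,(I_d + \lambda\eta\theta^{\top}).
\]

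First I would rewrite the scalar $\Pi_{\lambda}$ in a form convenient for differentiation. From $\eta = \Pi_{\lambda}\nabla_{\theta}\varphi$ (which is just \eqref{eqn:L.alpha.mirror.map} rearranged) combined with $\Pi_{\lambda} = 1 + \lambda\DP{\theta}{\eta}$, one obtains the equivalent representation $\Pi_{\lambda} = \bigl(1 - \lambda\DP{\theta}{\nabla_{\theta}\varphi}\bigr)^{-1}$, from which a direct computation gives $\nabla_{\theta}\Pi_{\lambda} = \lambda\Pi_{\lambda}^2\,(\nabla_{\theta}\varphi + \nabla_{\theta}^2\varphi\cdot\theta)$. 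Applying the product rule to $\eta = \Pi_{\lambda}\nabla_{\theta}\varphi$ and collapsing $\Pi_{\lambda}\nabla_{\theta}\varphi = \eta$ wherever it appears then yields
\[
J(\theta) = \Pi_{\lambda}(I_d + \lambda\eta\theta^{\top})\,\nabla_{\theta}^2\varphi + \lambda\eta\eta^{\top}.
\]

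The remaining step is to verify $J = \Pi_{\lambda}(I_d + \lambda\eta\theta^{\top})\,G_{\lambda}$. Expanding the right side using $G_{\lambda} = \nabla_{\theta}^2\varphi + \lambda\,\nabla_{\theta}\varphi\,(\nabla_{\theta}\varphi)^{\top}$ from \eqref{eqn:L.alpha.metric} reproduces the Hessian part of $J$ immediately; the leftover rank-one piece $\lambda\Pi_{\lambda}(I_d + \lambda\eta\theta^{\top})\,\nabla_{\theta}\varphi\,(\nabla_{\theta}\varphi)^{\top}$ must be shown to equal $\lambda\eta\eta^{\top}$. This collapse is driven by the scalar identity $\lambda\Pi_{\lambda}\DP{\theta}{\nabla_{\theta}\varphi} = \Pi_{\lambda} - 1$ (immediate from the closed form of $\Pi_{\lambda}$), which converts the term containing $\lambda^2\Pi_{\lambda}\DP{\theta}{\nabla_{\theta}\varphi}\eta(\nabla_{\theta}\varphi)^{\top}$ into exactly what is needed to combine with the other contribution. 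Multiplying by $G_{\lambda}^{-1}$ and feeding the result through the chain rule produces \eqref{eqn:L.alpha.flow.alternative}.

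The calculation is not deep, but the main obstacle is bookkeeping: the product rule and $G_{\lambda}$ each contribute a distinct rank-one correction, and matching them requires careful use of the three equivalent encodings of the coupling between $\theta$, $\nabla_{\theta}\varphi$, and $\eta$ (namely $\eta = \Pi_{\lambda}\nabla_{\theta}\varphi$, $\Pi_{\lambda} = 1+\lambda\DP{\theta}{\eta}$, and $\lambda\Pi_{\lambda}\DP{\theta}{\nabla_{\theta}\varphi} = \Pi_{\lambda}-1$). Without the last of these, the two rank-one pieces do not cancel into the clean factor $\Pi_{\lambda}(I_d + \lambda\eta\theta^{\top})$.
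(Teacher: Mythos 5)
Your proof is correct, and every identity you invoke checks out: $\Pi_{\lambda} = (1-\lambda\DP{\theta}{\nabla_{\theta}\varphi})^{-1}$, the gradient $\nabla_{\theta}\Pi_{\lambda} = \lambda\Pi_{\lambda}^2(\nabla_{\theta}\varphi + \nabla_{\theta}^2\varphi\,\theta)$, the resulting Jacobian $J = \Pi_{\lambda}(I_d+\lambda\eta\theta^{\top})\nabla_{\theta}^2\varphi + \lambda\eta\eta^{\top}$, and the collapse of the rank-one pieces via $\lambda\Pi_{\lambda}\DP{\theta}{\nabla_{\theta}\varphi} = \Pi_{\lambda}-1$ all verify, yielding $J = \Pi_{\lambda}(I_d+\lambda\eta\theta^{\top})G_{\lambda}$ and hence the stated dynamics. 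Your route is genuinely different from the paper's, however. The paper never computes $J$ from the explicit formula \eqref{eqn:L.alpha.mirror.map}; instead it obtains $G_{\lambda}$ in primal coordinates by differentiating the self-dual representation \eqref{eqn:log.self.dual} of the divergence, which produces the factored form $G_{\lambda} = \frac{1}{\Pi_{\lambda}}\bigl(I_d - \frac{\lambda}{\Pi_{\lambda}}\eta\theta^{\top}\bigr)\frac{\partial\eta}{\partial\theta}$, and then inverts the rank-one factor with the Sherman--Morrison formula to read off $G_{\lambda}^{-1} = \Pi_{\lambda}\frac{\partial\theta}{\partial\eta}(I_d+\lambda\eta\theta^{\top})$ before applying the chain rule. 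The paper's factorization makes the inverse metric appear with no cancellation bookkeeping, at the cost of differentiating the divergence and invoking Sherman--Morrison; your argument uses only the rank-one representation of $G_{\lambda}$ already recorded in \eqref{eqn:L.alpha.metric} and elementary product-rule algebra, at the cost of having to match two rank-one corrections by hand. Both establish the same key matrix identity $J G_{\lambda}^{-1} = \Pi_{\lambda}(I_d+\lambda\eta\theta^{\top})$. (Incidentally, your ordering $\eta\theta^{\top}$ agrees with the theorem statement; the last line of the paper's own proof writes $\theta_t\eta_t^{\top}$, which appears to be a typo there.)
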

\begin{proof}
Under the primal coordinate system, the metric $G_{\lambda}$ is given by
\begin{equation*}
\begin{split}
(G_{\lambda}(\theta))_{ij} &= - \left. \frac{\partial^2}{\partial \theta^i
  \partial \theta^{'j}}{\bf L}_{\lambda, \varphi}[\theta : \theta']
  \right|_{\theta = \theta'} \\
&= - \left.  \frac{\partial^2}{\partial \theta^i \partial \theta^{'j}}  \left\{
  \varphi(\theta) + \psi(\eta') - \frac{1}{\lambda} \log (1 + \lambda
  \DP{\theta}{\eta'}) \right\} \right|_{\theta = \theta'}\\
&= \frac{1}{\Pi_{\lambda}(\theta)} \left\{  \frac{\partial \eta^i}{\partial
  \theta^j} -  \frac{\lambda}{\Pi_{\lambda}(\theta)} \eta^i \sum_{k = 1}^d
  \theta^k \frac{\partial \eta^k}{\partial \theta^j} \right\}.
\end{split}
\end{equation*}
In matrix form, we have
\begin{equation} \label{eqn:G.lambda.matrix}
G_{\lambda}(\theta) = \frac{1}{\Pi_{\lambda}(\theta)} \left( I_d
- \frac{\lambda}{\Pi_{\lambda}(\theta)} \eta \theta^{\top} \right)
\frac{\partial \eta}{\partial \theta}(\theta),
\end{equation}
where $\frac{\partial \eta}{\partial \theta} = \left( \frac{\partial
\eta^i}{\partial \theta^j} \right)$ is the Jacobian of the transformation
$\theta \mapsto \eta$. Now we may invert \eqref{eqn:G.lambda.matrix} using the
Sherman–Morrison formula to get
\begin{equation*}
G_{\lambda}^{-1}(\theta) = \Pi_{\lambda}(\theta) \frac{\partial \theta}{\partial
\eta}(\eta) (I_d + \lambda \eta \theta^{\top}).
\end{equation*}

By definition, the gradient flow \eqref{eqn:L.lambda.flow} is given by
\[
\dv{t} \theta_t = -G_{\lambda}^{-1}(\theta_t) \nabla_{\theta} f(\theta_t).
\]
Expressing the flow in terms of the dual variable, we have, by the chain rule
again,
\begin{equation*}
\begin{split}
\dv{t} \eta_t &= \frac{\partial \eta}{\partial \theta}(\theta_t) \dv{t} \theta_t
\\
  &= - \frac{\partial \eta}{\partial \theta}(\theta_t)  \Pi_{\lambda}(\theta_t)
  \frac{\partial \theta}{\partial \eta}(\eta_t) (I_d + \lambda \eta_t
  \theta_t^{\top})  \nabla_{\theta} f(\theta_t) \\
  &= - \Pi_{\lambda}(\theta_t)  (I_d + \lambda \theta_t \eta_t^{\top})
  \nabla_{\theta} f(\theta_t).
\end{split}
\end{equation*}
\end{proof}

Next, by using the fact that $G_{\lambda}$ is a conformal Hessian metric, we
show that the conformal mirror descent gradient flow can be viewed as a time
change of a Hessian gradient flow.

\begin{theorem} [Time-change of Hessian gradient flow]  \label{thm:flow.main2}
Let $(\tilde{\theta}_s)_{s \geq 0}$ be the Hessian gradient flow
\eqref{eqn:MD.flow} with respect to the Bregman generator $\Phi_{\lambda}
= \frac{1}{\lambda}(e^{\lambda \varphi} - 1)$. Consider the time change $s
= s_t$, where $\dv{t} s_t =  \exp(\lambda \varphi(\tilde{\theta}_{s_t}))$. Then
$\theta_t = \tilde{\theta}_{s_t}$ is the conformal mirror descent
\eqref{eqn:L.lambda.flow} induced by $\varphi$. In particular, let $\zeta_t
= \nabla \Phi_{\lambda}(\theta_t)$ be the dual variable with respect to the
Bregman generator $\Phi_{\lambda}$. Then the flow can be expressed as $\dv{t}
\zeta_t = - e^{\lambda \varphi(\theta_t)} \nabla_{\theta} f(\theta_t)$.
\end{theorem}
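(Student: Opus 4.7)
The plan is to exploit the conformal representation of the metric already recorded in \eqref{eqn:L.alpha.metric}, namely $G_{\lambda}(\theta) = e^{-\lambda \varphi(\theta)}\, \nabla_{\theta}^{2} \Phi_{\lambda}(\theta)$, and then apply the chain rule. The essential observation is that inverting this identity gives
\[
G_{\lambda}^{-1}(\theta) = e^{\lambda \varphi(\theta)} \bigl(\nabla_{\theta}^{2} \Phi_{\lambda}(\theta)\bigr)^{-1},
\]
so the conformal mirror descent flow \eqref{eqn:L.lambda.flow} can be written as
\[
\dv{t}\theta_t = -e^{\lambda \varphi(\theta_t)} \bigl(\nabla_{\theta}^{2} \Phi_{\lambda}(\theta_t)\bigr)^{-1} \nabla_{\theta} f(\theta_t).
\]
On the other hand, the Hessian gradient flow associated with the Bregman generator $\Phi_{\lambda}$ is precisely $\dv{s}\tilde{\theta}_s = -(\nabla_{\theta}^{2}\Phi_{\lambda}(\tilde{\theta}_s))^{-1} \nabla_{\theta} f(\tilde{\theta}_s)$ by \eqref{eqn:MD.flow}. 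The two right-hand sides differ only by the positive scalar factor $e^{\lambda \varphi(\theta_t)}$, which is exactly the signature of a time reparameterization.

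Concretely, I would define $s_t$ as the unique solution to the ODE $\dv{t} s_t = e^{\lambda \varphi(\tilde{\theta}_{s_t})}$ with $s_0 = 0$; since the right-hand side is strictly positive and smooth, $t \mapsto s_t$ is a well-defined, strictly increasing change of time on the maximal existence interval. Setting $\theta_t \defeq \tilde{\theta}_{s_t}$ and applying the chain rule gives
\[
\dv{t}\theta_t = \dv{s_t}{t}\,\dv{s}\tilde{\theta}_s\Big|_{s=s_t} = -e^{\lambda \varphi(\theta_t)} \bigl(\nabla_{\theta}^{2}\Phi_{\lambda}(\theta_t)\bigr)^{-1} \nabla_{\theta} f(\theta_t),
\]
which matches the flow above. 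By uniqueness of solutions to the gradient-flow ODE with the given initial condition, $\theta_t$ is therefore the conformal mirror descent trajectory.

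For the final statement, I would use the dual form of the Hessian flow recorded in \eqref{eqn:MD.flow}: if $\tilde{\zeta}_s = \nabla \Phi_{\lambda}(\tilde{\theta}_s)$, then $\dv{s}\tilde{\zeta}_s = -\nabla_{\theta} f(\tilde{\theta}_s)$. Composing with the time change $s = s_t$, the chain rule yields
\[
\dv{t}\zeta_t = \dv{s_t}{t}\,\dv{s}\tilde{\zeta}_s\Big|_{s=s_t} = -e^{\lambda \varphi(\theta_t)}\, \nabla_{\theta} f(\theta_t),
\]
which is the claimed dual-variable ODE. There is no substantive obstacle beyond bookkeeping; the only point worth checking is that the time change is well defined (so that $\theta_t$ does not leave $\Theta$ before $\tilde{\theta}_s$ does), and this follows from positivity of $e^{\lambda \varphi}$ and smoothness of $\varphi$ on $\Theta$.
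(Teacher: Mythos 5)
Your argument is correct and follows essentially the same route as the paper: invert the conformal identity $G_{\lambda} = e^{-\lambda\varphi}\,\nabla^{2}_{\theta}\Phi_{\lambda}$ from \eqref{eqn:L.alpha.metric}, apply the chain rule to the time-changed Hessian flow, and match the two right-hand sides (the paper leaves the dual-variable statement as ``similar,'' which you spell out). Your added remark on well-posedness of the time change is a harmless refinement, not a difference in approach.
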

\begin{proof}
By \eqref{eqn:L.lambda.flow} and \eqref{eqn:L.alpha.metric}, we have
\begin{equation} \label{eqn:L.alpha.flow.conformal}
\dv{t} \theta_t = - G_{\lambda}^{-1}(\theta_t) \nabla_{\theta} f(\theta_t)
= - e^{\lambda \varphi(\theta_t)} \tilde{G}_0^{-1}(\theta_t) \nabla_{\theta}
f(\theta_t),
\end{equation}
where $\tilde{G}_0 = \nabla_{\theta}^2 \Phi_{\lambda}$. Let $\tilde{\theta}(s)$
be the Hessian gradient flow \eqref{eqn:MD.flow} induced by the metric
$\tilde{G}_0$, and let $s = s_t$ be the given time change. Applying the chain
rule in \eqref{eqn:MD.flow}, we have
\begin{equation*}
\begin{split}
\dv{t} \tilde{\theta}_{s_t} &= \dv{s} \tilde{\theta}_{s_t}
\dv{t} s_t = - \tilde{G}_0^{-1}(\tilde{\theta}_{s_t})
\nabla_{\theta} f(\tilde{\theta}_{s_t}) \dv{t} s_t \\
 &= - e^{\lambda \varphi(\tilde{\theta}_{s_t})}
 \tilde{G}_0^{-1}(\tilde{\theta}_{s_t}) \nabla_{\theta} f(\tilde{\theta}_{s_t}).
\end{split}
\end{equation*}
Comparing this with \eqref{eqn:L.alpha.flow.conformal}, we see that
$\tilde{\theta}_{s_t} = \theta_t$. The proof of the last statement is similar.
\end{proof}

By Theorem \ref{thm:flow.main2}, the trajectory of a conformal mirror descent
gradient flow is the same as that of a Hessian gradient flow; the conformal
transformation of the metric introduces a {\it time-varying learning rate}
depending on the value $\varphi(\theta_t)$. The main novelty of conformal mirror
descent is that the $\lambda$-duality suggests novel choices of the generator
$\varphi$; additionally, the $\lambda$-mirror map is more natural in certain
problems. For example, in Section
\ref{sub:online_parameter_estimation_for_generalized_exponential_family} we
apply it to online natural gradient learning for some generalized exponential
families.

\medskip
We give a concrete example which generalizes \cite[Theorem 5.5]{pw18}. For
a given regular $c_{\lambda}$-convex generator $\varphi$, consider minimizing
either $f(\theta) = {\bf L}_{\lambda, \varphi}[\theta^* : \theta]$ or $f(\theta)
= {\bf L}_{\lambda, \varphi}[\theta : \theta^*]$ for some $\theta^* \in \Theta$.
Note that $f$ is typically not convex in $\theta$ (or $\eta$). We show that the
conformal mirror descent evolves along geodesics of the underlying dualistic
structure. Also, see \cite{fujiwara1995gradient} for a detailed analysis of the
dually flat case.

\begin{proposition} [Primal and dual flows] \label{prop:primal.dual.flows}
\begin{enumerate}
    \item[(i)] The trajectory of the primal flow
    \begin{equation} \label{eqn:primal.flow}
    \dv{}{t} \theta_t = -\mathrm{grad}_{G_{\lambda}}
    \LD{\theta^*}{\cdot}(\theta_t)
    \end{equation}
    follows a time-changed primal geodesic, i.e., along the straight line from
    $\theta_0$ to $\theta^*$ under the primal coordinate system.
    \item[(ii)] The trajectory of the dual flow
    \begin{equation} \label{eqn:dual.flow}
    \dv{}{t} \theta_t = -\mathrm{grad}_{G_{\lambda}}
    \LD{\cdot}{\theta^*}(\theta_t)
    \end{equation}
    follows a time-changed dual geodesic, i.e., along the straight line from
    $\eta_0$ to $\eta^*$ under the dual coordinate system.
\end{enumerate}
\end{proposition}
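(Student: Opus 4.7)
The plan is to reduce both statements to the elementary observation that, for any Bregman generator $\Phi$, the Hessian gradient flow of $\BD[\Phi]{\theta^*}{\theta}$ in $\theta$ moves along the segment toward $\theta^*$, and then to invoke the conformal identity \eqref{eqn:conformal.divergence} and the self-dual representation \eqref{eqn:log.self.dual} to transfer this picture to the $\lambda$-geometry.

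For part (i), I would use \eqref{eqn:conformal.divergence} to rewrite $f(\theta) = \LD{\theta^*}{\theta}$ as a monotone transformation of a Bregman divergence,
\[
f(\theta) = h\!\left(\BD[\Phi_{\lambda}]{\theta^*}{\theta}\right), \qquad h(u) = \tfrac{1}{-\lambda}\log\!\left(1 + (-\lambda)\, e^{-\lambda\varphi(\theta^*)} u\right),
\]
where $h$ is strictly increasing on its admissible range. A direct calculation gives $\nabla_\theta \BD[\Phi_{\lambda}]{\theta^*}{\theta} = -\nabla^2_\theta \Phi_{\lambda}(\theta)(\theta^* - \theta)$. Combining the chain rule with the conformal relation $G_{\lambda}^{-1}(\theta) = e^{\lambda\varphi(\theta)}\bigl(\nabla^2_\theta \Phi_{\lambda}(\theta)\bigr)^{-1}$ from \eqref{eqn:L.alpha.metric} then yields
\[
\mathrm{grad}_{G_{\lambda}} f(\theta) \;=\; -\, h'\!\left(\BD[\Phi_{\lambda}]{\theta^*}{\theta}\right) e^{\lambda\varphi(\theta)}\,(\theta^* - \theta).
\]
Since both scalar factors multiplying $(\theta^* - \theta)$ are positive on the admissible domain, the flow satisfies $\dv{t}\theta_t = \kappa(t)(\theta^* - \theta_t)$ with $\kappa(t) > 0$, so $\theta_t$ stays on the segment joining $\theta_0$ and $\theta^*$. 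By dual projective flatness of the $\lambda$-geometry, this segment is exactly the primal geodesic, proving (i).

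For part (ii), I would use the self-dual identity \eqref{eqn:log.self.dual} to rewrite the objective in dual coordinates as $\LD{\theta}{\theta^*} = \LD[\lambda][\psi]{\eta^*}{\eta}$, where $\eta = \nabla^{(\lambda)}_\theta\varphi(\theta)$ and $\eta^* = \nabla^{(\lambda)}_\theta\varphi(\theta^*)$. By the symmetry of the $\lambda$-duality under $(\varphi,\theta) \leftrightarrow (\psi,\eta)$, the same Riemannian metric $g$ takes the analogous conformal Hessian form $\tilde G_{\lambda}(\eta) = e^{-\lambda\psi(\eta)} \nabla^2_\eta \Psi_{\lambda}(\eta)$ with $\Psi_{\lambda} = \frac{1}{\lambda}(e^{\lambda\psi}-1)$ in the dual chart. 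Since the Riemannian gradient flow \eqref{eqn:L.lambda.flow} is coordinate invariant, expressing \eqref{eqn:dual.flow} in $\eta$-coordinates reproduces the setting of part (i) with $(\psi,\eta,\eta^*)$ playing the roles of $(\varphi,\theta,\theta^*)$; the same argument then yields $\dv{t}\eta_t = \tilde\kappa(t)(\eta^* - \eta_t)$ with $\tilde\kappa(t) > 0$, proving (ii).

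The principal subtlety will be justifying the passage to the dual chart in part (ii), namely that $g$ really takes the asserted conformal Hessian form $\tilde G_{\lambda}$ in $\eta$-coordinates. I expect this to follow either by applying the change-of-variables formula to \eqref{eqn:L.alpha.metric} through the $\lambda$-mirror map \eqref{eqn:L.alpha.mirror.map}, or more cleanly by differentiating \eqref{eqn:log.self.dual} twice and appealing to the symmetric role of $(\varphi,\theta)$ and $(\psi,\eta)$ in the induced-metric formula $g_{ij} = -\partial_i \partial_{j'} \mathbf{L}_{\lambda,\cdot}[\,\cdot\,:\,\cdot\,]|_{\theta = \theta'}$.
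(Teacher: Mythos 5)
Your proposal is correct, and it reaches the result by a genuinely different route than the paper, essentially the mirror image of it. Both arguments prove one half directly and deduce the other from the duality swap ${\bf L}_{\lambda, \varphi}[\theta : \theta'] = {\bf L}_{\lambda, \psi}[\eta' : \eta]$, but in opposite order and with different machinery for the direct half. The paper proves (ii) first: it differentiates the self-dual representation \eqref{eqn:log.self.dual} to get $\nabla_{\theta} {\bf L}_{\lambda, \varphi}[\cdot : \theta^*] = \frac{\eta}{1 + \lambda \DP{\theta}{\eta}} - \frac{\eta^*}{1 + \lambda \DP{\theta}{\eta^*}}$, feeds this into the dual dynamics of Theorem \ref{thm:flow.main1}, and obtains the explicit rate \eqref{eqn:dual.flow.update}; part (i) then follows by interchanging $(\varphi, \theta)$ with $(\psi, \eta)$. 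You instead prove (i) first, via the conformal identity \eqref{eqn:conformal.divergence} together with the conformal Hessian form of $G_{\lambda}$ in \eqref{eqn:L.alpha.metric}: the Hessian $\nabla^2_{\theta}\Phi_{\lambda}$ in $\nabla_{\theta}{\bf B}_{\Phi_{\lambda}}[\theta^* : \theta] = -\nabla^2_{\theta}\Phi_{\lambda}(\theta)(\theta^* - \theta)$ cancels against $G_{\lambda}^{-1}$, and the factors $h'$ and $e^{\lambda\varphi}$ are positive scalars that only rescale time; your (ii) then follows by the same swap. Your computation checks out (including the positivity of $h'$, since $1 + (-\lambda)e^{-\lambda\varphi(\theta^*)}{\bf B}_{\Phi_{\lambda}}[\theta^* : \theta] = e^{-\lambda {\bf L}_{\lambda,\varphi}[\theta^* : \theta]} > 0$). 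What your route buys is conceptual clarity: it exhibits the primal flow as a doubly time-changed classical Bregman flow, making Theorem \ref{thm:flow.main2} do the work. What the paper's route buys is the explicit time-change factor $\frac{1 + \lambda\DP{\theta_t}{\eta_t}}{1 + \lambda\DP{\theta_t}{\eta^*}}$ in \eqref{eqn:dual.flow.update}, which is reused verbatim in Theorem \ref{thm:online.gradient.step}, whereas your $\tilde{\kappa}(t)$ is left implicit. The subtlety you flag in (ii) --- that $g$ takes the conformal Hessian form $e^{-\lambda\psi}\nabla^2_{\eta}\Psi_{\lambda}$ in the dual chart --- is a genuine obligation, but it is precisely the fact the paper also invokes without further proof (``both ${\bf L}_{\lambda,\varphi}$ and ${\bf L}_{\lambda,\psi}$ induce the same Riemannian metric''), and your proposed justification by differentiating \eqref{eqn:log.self.dual} twice, using the symmetric roles of $(\varphi,\theta)$ and $(\psi,\eta)$, is the standard and correct one.
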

\begin{proof}
We first consider the dual flow \eqref{eqn:dual.flow}. Using the self-dual
representation \eqref{eqn:log.self.dual},
\[
\nabla_{\theta} {\bf L}_{\lambda, \varphi}[\cdot : \theta^*] = \nabla_{\theta}
\varphi(\theta) - \frac{\eta^*}{1 + \lambda \DP{\theta}{\eta^*}} = \frac{\eta}{1
+ \lambda \DP{\theta}{\eta}} - \frac{\eta^*}{1 + \lambda \DP{\theta}{\eta^*}},
\]
where the last equality can be verified using the definition
\eqref{eqn:L.alpha.mirror.map} of the $\lambda$-mirror map.

By Theorem \ref{thm:flow.main1} we have, after some simplification,
\begin{equation} \label{eqn:dual.flow.update}
\dv{}{t} \eta_t = - \frac{1 + \lambda \DP{\theta_t}{\eta_t}}{1 + \lambda
\DP{\theta_t}{\eta^*}} (\eta_t - \eta^*).
\end{equation}
Thus, the dual flow evolves along a time-changed dual geodesic.

Since ${\bf L}_{\lambda, \varphi}[\theta^* : \theta] = {\bf L}_{\lambda,
\psi}[\eta : \eta^*]$ and both ${\bf L}_{\lambda, \varphi}$ and ${\bf
L}_{\lambda, \psi}$ induce the same Riemannian metric, the primal flow
\eqref{eqn:primal.flow} for ${\bf L}_{\lambda, \varphi}$ is equivalent to the
dual flow for ${\bf L}_{\lambda, \psi}$. By the case proved above, we have that
the trajectory follows a time-changed straight line under the
$\theta$-coordinates.
\end{proof}

To implement conformal mirror descent in practice, the flow
\eqref{eqn:L.lambda.flow} must be discretized. From Definition \ref{def:cmd} and
Theorems \ref{thm:flow.main1} and \ref{thm:flow.main2}, we have the following
three forward Euler discretizations:
\begin{itemize}
    \item Primal Euler discretization: $\theta_{k+1} = \theta_k - \delta
      G_{\lambda}^{-1}(\theta_k) \nabla_{\theta} f(\theta_k)$.
    \item Dual Euler discretization: $\eta_{k+1} = \eta_k - \delta
      \Pi_\lambda(\theta_k)\left( I_d + \lambda \theta_{k}\eta_k^{\top}
      \right)\nabla_{\theta} f(\theta_k)$.
    \item Mirror descent with adaptive learning rate: $\zeta_{k+1} = \zeta_k
      - \delta e^{\lambda \varphi(\theta_k)} \nabla_{\theta} f(\theta_k)$, where
      $\zeta_k = \nabla_{\theta} \Phi_{\lambda}(\theta_k)$.
\end{itemize}
A detailed analysis of these (and possibly other) discretization schemes is
beyond the scope of this paper. In the next subsection, we study the convergence
of conformal mirror descent in continuous time.

\subsection{Convergence results} \label{sec:flow.convergence.statements}
In this subsection, we present continuous time convergence results for
conformal mirror descent that are analogous to those of mirror descent.
Our main tool is Lyapunov analysis following \cite{wilson18}. In what follows,
we let $(\theta_t)_{t \geq 0}$ be the solution to the gradient flow
\eqref{eqn:L.lambda.flow} for a given continuously differentiable and convex
function $f: \Theta \rightarrow \mathbbm{R}$. We also let $\theta^*$ be
a minimizer of $f$ over $\Theta$.

We first observe that the logarithmic divergence is a Lyapunov function of the
gradient flow.

\begin{lemma} \label{lem:L.div.Lyapunov}
The functional $\mathcal{E}_t = {\bf L}_{\lambda, \varphi}[\theta^* : \theta_t]$
is a Lyapunov function of the gradient flow, i.e., $\dv{}{t} \mathcal{E}_t \leq
0$.
\end{lemma}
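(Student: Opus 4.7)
The plan is to differentiate $\mathcal{E}_t = {\bf L}_{\lambda, \varphi}[\theta^* : \theta_t]$ directly, recognize the metric $G_\lambda$ appearing in the coefficient of $\dot\theta_t$, cancel it against the flow equation, and close with convexity of $f$. Setting $A_t \defeq 1 + \lambda \DP{\nabla_\theta \varphi(\theta_t)}{\theta^* - \theta_t}$, which is strictly positive along the flow so that the logarithm in \eqref{eqn:log.div} is well defined, the chain rule applied to \eqref{eqn:log.div} yields
\begin{equation*}
\dv{}{t}\mathcal{E}_t = -\nabla_\theta\varphi(\theta_t)^\top \dot\theta_t - \frac{1}{A_t}\Bigl[(\theta^* - \theta_t)^\top \nabla^2_\theta \varphi(\theta_t)\,\dot\theta_t - \nabla_\theta\varphi(\theta_t)^\top \dot\theta_t\Bigr].
\end{equation*}

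Next I would collect terms, using the algebraic identity $1 - A_t = -\lambda \DP{\nabla_\theta \varphi(\theta_t)}{\theta^* - \theta_t}$, to rewrite the right-hand side as $-\frac{1}{A_t}(\theta^* - \theta_t)^\top \bigl[\nabla_\theta^2 \varphi(\theta_t) + \lambda \nabla_\theta \varphi(\theta_t) \nabla_\theta \varphi(\theta_t)^\top\bigr]\dot\theta_t$. By the first (rank-one correction) representation of the metric in \eqref{eqn:L.alpha.metric}, the bracket equals $G_\lambda(\theta_t)$, so $\dv{}{t}\mathcal{E}_t = -\frac{1}{A_t}(\theta^* - \theta_t)^\top G_\lambda(\theta_t)\, \dot\theta_t$. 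Substituting the conformal mirror descent equation $\dot\theta_t = -G_\lambda^{-1}(\theta_t) \nabla_\theta f(\theta_t)$ then cancels $G_\lambda$ cleanly, leaving
\begin{equation*}
\dv{}{t}\mathcal{E}_t = \frac{1}{A_t}(\theta^* - \theta_t)^\top \nabla_\theta f(\theta_t).
\end{equation*}
Convexity of $f$ together with optimality of $\theta^*$ then gives $(\theta^* - \theta_t)^\top \nabla_\theta f(\theta_t) \leq f(\theta^*) - f(\theta_t) \leq 0$, and since $A_t > 0$ we conclude $\dv{}{t}\mathcal{E}_t \leq 0$.

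An alternative, more structural route is available via the conformal identity \eqref{eqn:conformal.divergence}: fixing the first argument at $\theta = \theta^*$ makes the conformal factor $e^{-\lambda \varphi(\theta^*)}$ a positive constant, so $\mathcal{E}_t$ is a strictly monotone function of ${\bf B}_{\Phi_\lambda}[\theta^* : \theta_t]$. By Theorem \ref{thm:flow.main2}, $\theta_t = \tilde\theta_{s_t}$ for the Hessian gradient flow of $\Phi_\lambda$ under a strictly increasing time change $s_t$, and the standard mirror descent Lyapunov computation reviewed in Section \ref{sub:mirror_descent_and_riemannian_gradient_flow} gives $\frac{d}{ds} {\bf B}_{\Phi_\lambda}[\theta^* : \tilde\theta_s] \leq 0$ by convexity of $f$. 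Composing with the monotone time change and the monotone logarithm delivers the same conclusion.

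The only real obstacle is algebraic: one has to spot that combining the $\nabla_\theta \varphi^\top \dot\theta_t$ contributions with the $A_t^{-1}$ factor produces exactly the rank-one correction that constitutes $G_\lambda$. Once that identification is made, cancellation against $G_\lambda^{-1}$ from the flow equation is immediate and the concluding convexity inequality is standard.
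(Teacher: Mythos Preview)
Your argument is correct and reaches exactly the same identity as the paper, namely $\dv{}{t}\mathcal{E}_t = \frac{1}{A_t}\DP{\nabla_\theta f(\theta_t)}{\theta^* - \theta_t}$ with $A_t>0$, from which convexity of $f$ immediately gives the conclusion. The route, however, is genuinely different. The paper differentiates the self-dual representation \eqref{eqn:log.self.dual} in the dual variable $\eta_t$ and then plugs in the dual dynamics of Theorem~\ref{thm:flow.main1}; you instead differentiate the primal formula \eqref{eqn:log.div}, recognize the rank-one expression of $G_\lambda$ from \eqref{eqn:L.alpha.metric}, and cancel it directly against $G_\lambda^{-1}$ in the flow equation. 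Your approach is more self-contained (it does not rely on Theorem~\ref{thm:flow.main1}) and makes transparent why the divergence is a Lyapunov function for \emph{any} Riemannian gradient flow whose metric coincides with the second-variation of the divergence in its second argument. The paper's dual computation, on the other hand, yields the prefactor in the form $\frac{1+\lambda\DP{\theta_t}{\eta_t}}{1+\lambda\DP{\theta^*}{\eta_t}}$, which is the same quantity as your $\frac{1}{A_t}$ and feeds more directly into the subsequent proof of Theorem~\ref{thm:convergence.main.text}. Your alternative argument via \eqref{eqn:conformal.divergence} and Theorem~\ref{thm:flow.main2} is also valid and gives a clean conceptual explanation, though it is less explicit about the decay rate.
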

\begin{proof}
Using the self-dual representation \eqref{eqn:log.self.dual}, we have
\begin{equation*}
\begin{split}
\dv{}{t} \mathcal{E}_t = \dv{}{t} \left( \varphi(\theta^*) + \psi(\eta_t)
- \frac{1}{\lambda} \log (1 + \lambda \langle \theta^*, \eta_t \rangle) \right)
= \frac{\DP{\theta_t}{\dv{}{t} \eta_t}}{1 + \lambda \DP{\theta_t}{\eta_t}}
- \frac{\DP{\theta^*}{\dv{}{t} \eta_t}}{1 + \lambda \DP{\theta^*}{\eta_t}}.
\end{split}
\end{equation*}
Using \eqref{eqn:L.alpha.flow.alternative} and simplifying, we have
\begin{equation} \label{eqn:Lyapunov.1}
\dv{}{t}
\mathcal{E}_t = \frac{1 + \lambda \DP{\theta_t}{\eta_t}}{1 + \lambda
\DP{\theta^*}{\eta_t}} \DP{\nabla_{\theta} f(\theta_t)}{\theta^* - \theta_t}
\leq \frac{1 + \lambda \DP{\theta_t}{\eta_t}}{1 + \lambda \DP{\theta^*}{\eta_t}}
(f(\theta_t) - f(\theta^*)) \leq 0.
\end{equation}
\end{proof}

\begin{theorem} \label{thm:convergence.main.text}
  Define $\tau_t = \int_0^t e^{\lambda \varphi(\theta_s)} \dd{s}$, so that
$\dot{\tau}_t = \dv{}{t} \tau_t = e^{\lambda \varphi(\theta_t)}$. Let
$ \hat{\theta}_t = \frac{1}{\tau_t} \int_0^t \theta_s \dot{\tau}_s \dd{s}$,
which is a weighted average of the trajectory up to time $t$. If $\theta^*$ is
a minimizer of $f$ over $\Theta$, then
\begin{equation} \label{eqn:L.lambda.conv.bound}
f(\hat{\theta}_t) - f(\theta^*) \leq \frac{{\bf B}_{\Phi_{\lambda}}[\theta^*
: \theta_0]}{\tau_t},
\end{equation}
where $\Phi_{\lambda} = \frac{1}{\lambda} (e^{\lambda \varphi} - 1)$ is the
Bregman generator. In particular, if $f$ is strictly convex, then
$f(\hat{\theta}_t) - f(\theta^*) = O(\frac{1}{t})$ as $t \rightarrow \infty$.
\end{theorem}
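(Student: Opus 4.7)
My plan is to run a standard Lyapunov-and-Jensen argument, but on the Bregman divergence of $\Phi_{\lambda}$ rather than on the $\lambda$-logarithmic divergence itself, exploiting Theorem \ref{thm:flow.main2} which identifies conformal mirror descent with a time-changed Hessian gradient flow of the Bregman generator $\Phi_{\lambda}$. Concretely, I take as Lyapunov candidate
\[
\mathcal{V}_t = {\bf B}_{\Phi_{\lambda}}[\theta^* : \theta_t],
\]
and work in the conjugate variable $\zeta_t = \nabla \Phi_{\lambda}(\theta_t)$, whose dynamics by Theorem \ref{thm:flow.main2} is $\dv{t}\zeta_t = -e^{\lambda \varphi(\theta_t)} \nabla f(\theta_t) = -\dot{\tau}_t\, \nabla f(\theta_t)$. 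This choice of Lyapunov function is natural because the time-change perspective reduces the problem, up to reparametrization, to a Hessian gradient flow for $\Phi_{\lambda}$, where the $\Phi_{\lambda}$-Bregman divergence is the classical Lyapunov function.

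The first step is to differentiate $\mathcal{V}_t$. Writing ${\bf B}_{\Phi_{\lambda}}$ in self-dual form \eqref{eqn:Bregman.self.dual} as $\Phi_{\lambda}(\theta^*) + \Phi_{\lambda}^*(\zeta_t) - \DP{\theta^*}{\zeta_t}$ and using $\nabla \Phi_{\lambda}^*(\zeta_t) = \theta_t$, the cross terms collapse and
\[
\dv{t}\mathcal{V}_t = -\DP{\dot{\zeta}_t}{\theta^* - \theta_t} = \dot{\tau}_t \DP{\nabla f(\theta_t)}{\theta^* - \theta_t} \leq \dot{\tau}_t\, (f(\theta^*) - f(\theta_t)),
\]
where the last step is convexity of $f$. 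Integrating from $0$ to $t$ and using ${\bf B}_{\Phi_{\lambda}}[\theta^* : \theta_t] \geq 0$ gives
\[
\int_0^t \dot{\tau}_s\, f(\theta_s)\, \dd{s} \leq \tau_t\, f(\theta^*) + {\bf B}_{\Phi_{\lambda}}[\theta^* : \theta_0].
\]
Now I apply Jensen's inequality to the convex function $f$ against the probability measure $\dot{\tau}_s\, \dd{s} / \tau_t$ on $[0,t]$ (precisely the measure that defines $\hat{\theta}_t$), obtaining $f(\hat{\theta}_t) \leq \tfrac{1}{\tau_t}\int_0^t f(\theta_s)\, \dot{\tau}_s\, \dd{s}$. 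Dividing by $\tau_t$ yields the claimed bound \eqref{eqn:L.lambda.conv.bound}.

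For the asymptotic $O(1/t)$ statement in the strictly convex case, what remains is to show $\tau_t$ grows at least linearly. The idea is to use Lemma \ref{lem:L.div.Lyapunov}, which confines the trajectory to the sublevel set $\{\theta : {\bf L}_{\lambda, \varphi}[\theta^* : \theta] \leq {\bf L}_{\lambda, \varphi}[\theta^* : \theta_0]\}$; strict convexity of $f$ together with standard coercivity of the divergence makes this sublevel set compact, so $\varphi(\theta_t)$ remains bounded and $\dot{\tau}_t = e^{\lambda \varphi(\theta_t)}$ stays bounded below by some $c > 0$, giving $\tau_t \geq ct$. The main obstacle I anticipate is not the Lyapunov algebra, which is essentially forced by Theorem \ref{thm:flow.main2}, but isolating the minimal regularity or coercivity condition on $\varphi$ and $f$ under which this sublevel compactness and hence the uniform lower bound on $\dot{\tau}_t$ actually holds; in practice this will either be stated as a hypothesis on $(\varphi, f)$ or verified in the concrete applications of Sections \ref{sub:online_parameter_estimation_for_generalized_exponential_family} and \ref{sub:Dirichlet}.
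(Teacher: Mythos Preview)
Your proof is correct and is precisely the route the paper itself flags at the opening of its proof (``This result can be derived using Theorem \ref{thm:flow.main2} and convergence results of Hessian gradient flow''); the paper's ``self-contained'' Lyapunov function $\mathcal{E}_t$ is, via the identity \eqref{eqn:conformal.divergence}, just $e^{-\lambda\varphi(\theta^*)}$ times your $\mathcal{V}_t$ plus the integrated gap term, so the two computations coincide. The only minor difference is in the $O(1/t)$ step: the paper argues $\theta_t \to \theta^*$ from strict convexity and \eqref{eqn:Lyapunov.1} (hence $\dot\tau_t \to e^{\lambda\varphi(\theta^*)} > 0$), whereas you bound $\dot\tau_t$ below by sublevel-set compactness; both require the same implicit coercivity assumption you correctly identify.
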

\begin{proof}
This result can be derived using Theorem \ref{thm:flow.main2} and convergence
results of Hessian gradient flow (see e.g.~\cite[Section 2.1.3]{w18}). For
completeness, we give a self-contained proof. Using a similar argument as in the
proof of Lemma \ref{lem:L.div.Lyapunov}, we have that
\[
\mathcal{E}_t = \frac{1}{\lambda} \left(1 -  e^{-\lambda {\bf L}_{\lambda,
\varphi}[\theta^* : \theta_t]}\right) + \int_0^t e^{\lambda (\varphi(\theta_s)
- \varphi(\theta^*))} (f(\theta_s) - f(\theta^*)) \dd{s}
\]
satisfies
\begin{equation} \label{eqn:E.derivative}
\dv{}{t} \mathcal{E}_t = - e^{-\lambda (\varphi(\theta_t) - \varphi(\theta^*))}
{\bf B}_{\Phi_{\lambda}}[\theta^* : \theta_t] \leq 0,
\end{equation}
and hence is another Lyapunov function. Since $\mathcal{E}_t$ is non-increasing,
we have
\begin{equation} \label{eqn:Lyapunov.computation}
e^{-\varphi(\theta^*)} \tau_t \int_0^t \frac{\dot{\tau}_s}{\tau_t} (f(\theta_s)
- f(\theta^*)) \dd{s} \leq \mathcal{E}_t \leq \mathcal{E}_0
= \frac{1}{\lambda} (1 - e^{-\lambda {\bf L}_{\lambda, \varphi}[\theta^*
: \theta_0]}).
\end{equation}
Note that by \eqref{eqn:conformal.divergence}, the last expression in
\eqref{eqn:Lyapunov.computation} is equal to $e^{-\varphi(\theta^*)} {\bf
B}_{\Phi_{\lambda}}[\theta^* : \theta_0]$. Since $f(\cdot) - f(\theta^*)$ is
convex, by Jensen's inequality we have
\[
f(\hat{\theta}_t) - f(\theta^*) \leq \int_0^t \frac{\dot{\tau}_s}{\tau_t}
(f(\theta_s) - f(\theta^*)) \dd{s} \leq \frac{1}{\tau_t}{\bf
B}_{\Phi_{\lambda}}[\theta^* : \theta_0].
\]
If $f$ is strictly convex, from \eqref{eqn:Lyapunov.1} we have that $\lim_{t
\rightarrow \infty} \theta_t = \theta^*$. Since $e^{\lambda \varphi(\theta_t)}
\rightarrow e^{\lambda \varphi(\theta^*)}$, the quantity $\tau_t = \int_0^t
e^{\lambda \varphi(\theta_s)} \dd{s}$ grows linearly as $t \rightarrow \infty$.
It follows from \eqref{eqn:L.lambda.conv.bound} that $f(\hat{\theta}_t)
- f(\theta^*) = O(\frac{1}{t})$ as $t \rightarrow \infty$.
\end{proof}

\section{Online estimation of generalized exponential family}
\label{sub:online_parameter_estimation_for_generalized_exponential_family}
Mirror descent is often used to estimate parameters of
stochastic models, both offline and online. Using a duality between the exponential
family and Bregman divergence \cite{bmdg05}, the authors of \cite{rm15}
considered online parameter estimation for exponential families, and showed that
the mirror descent step is equivalent to the natural gradient step \cite{a98}.
In this section we generalize this result to the {\it $\lambda$-exponential
family} introduced in \cite{WZ21}.

We begin with some preliminaries. Following \cite{WZ21}, by a {\it
$\lambda$-exponential family} we mean a parameterized probability density (with
respect to a reference measure $\nu$) of the form
\begin{equation} \label{eqn:F.alpha.family}
p_{\theta}(x) = (1 + \lambda \langle \theta , F(x) \rangle)_+^{1/\lambda}
e^{-\varphi(\theta)},
\end{equation}
where $x_+ = \max\{x, 0\}$ and $F(x) = (F^1(x), \ldots, F^d(x))$ is a vector of
statistics. For example, if $\nu$ is the Lebesgue measure on $\R$, $\lambda \in
(-2, 0)$ and $F(x) = (x, x^2)$, then we obtain from \eqref{eqn:F.alpha.family}
the Student's $t$ distribution (as a location-scale family) with
$\frac{-2}{\lambda} - 1 > 0$ degrees of freedom (see Example \ref{eg:student-t}
below). The density \eqref{eqn:F.alpha.family} is a generalized or deformed
exponential family and is a reparameterized version of the {\it $q$-exponential
family} (where $q = 1 - \lambda$) in statistical physics (see \cite[Section
3]{WZ21} for the precise relation).\footnote{Note that parameterized densities
similar to \eqref{eqn:F.alpha.family} were studied by other authors such as
\cite{km20}, but their motivations were quite different.} As $\lambda
\rightarrow 0$, we recover the usual exponential family. Under suitable
regularity conditions (including $\lambda < 1$ or $q = 1 - \lambda > 0$), it can
be shown that the divisive normalization function $\varphi$ in
\eqref{eqn:F.alpha.family} is $c_{\lambda}$-convex on the parameter space
$\Theta$ and hence, defines a $\lambda$-logarithmic divergence. This divergence
can be interpreted probabilistically as ${\bf L}_{\lambda, \varphi}[ \theta
: \theta'] = \HHr_q ( p_{\theta'} || p_{\theta})$, where $\HHr_q$ is the {\it
R\'{e}nyi divergence}  of order $q$. Consequently, the induced Riemannian metric
is a constant multiple of the {\it Fisher information metric} $\mathcal{I}$
\cite{vh14}:
\begin{equation} \label{eqn:Fisher}
G_{\lambda}(\theta) = (1 - \lambda) \mathcal{I}(\theta).
\end{equation}
Moreover, the dual variable $\eta = \nabla_{\theta}^{(\lambda)} \varphi(\theta)$
under the $\lambda$-mirror map can be interpreted as a generalized expectation
parameter known as the {\it escort expectation}:
\[
  \eta = \int F(x) \frac{p_{\theta}(x)^q}{ \int p_{\theta}^{q} \dd{\nu}}
  \dd{\nu}(x).
\]
In fact, the density \eqref{eqn:F.alpha.family} maximizes the R\'{e}nyi entropy
of order $q$ subject to constraints on the escort expectation. These (and other)
results nicely parallel those of the exponential family.

Consider now online estimation of \eqref{eqn:F.alpha.family} under
i.i.d.~sampling. By considering the distribution of $Y = F(X)$, we consider
a $\lambda$-exponential family on $\R^d$ as
\begin{equation} \label{eqn:F.alpha.y}
p_{\theta}(y) = (1 + \lambda \DP{\theta}{y})_+^{1/\lambda} e^{-\varphi(\theta)}.
\end{equation}
Suppose we observe data points $y_k$, $k = 1, 2, \ldots$. Let the current guess
of the parameter be $\theta_k$. After observing $y_k$, we update $\theta_k$ to
$\theta_{k+1}$ by a minimizing gradient step with respect to the log-loss
\begin{equation} \label{eqn:log.loss}
f_k(\theta) = - \log p_{\theta}(y_k) = \varphi(\theta) - \frac{1}{\lambda}
\log(1 + \lambda \DP{\theta}{y_k}).
\end{equation}
Note that the negative log-likelihood $f_k$ is typically not convex in $\theta$.
We do this by discretizing the conformal mirror descent
\eqref{eqn:L.alpha.flow.alternative}, where the generating function $\varphi$ is
the potential function in \eqref{eqn:F.alpha.y}. Since $G_{\lambda}$ is
a multiple of the Fisher metric, the forward Euler step of
\eqref{eqn:L.alpha.flow.alternative} in dual coordinates leads to the
(unconstrained) {\it natural gradient update}
\begin{equation} \label{eqn:online.L.alpha.step}
\eta_{k+1} = \eta_k - \delta_k \Pi_{\lambda}(\theta_k) (I_d + \lambda \eta_k
\theta_k^{\top}) \nabla_{\theta} f_k(\theta_k),
\end{equation}
where $\delta_k > 0$ is the learning rate. Simplifying
\eqref{eqn:online.L.alpha.step}, we obtain an explicit and clean update which is
not obvious from the time change perspective.

\begin{theorem} [Online natural gradient step for $\lambda$-exponential family]
\label{thm:online.gradient.step}
The online natural gradient update \eqref{eqn:online.L.alpha.step} is given by
\begin{equation} \label{eqn:F.lambda.online.step}
\eta_{k+1} = \eta_k + \delta \frac{1 + \lambda \DP{\theta_k}{\eta_k}}{1
+ \lambda \DP{\theta_k}{y_k} } (y_k - \eta_k).
\end{equation}
\end{theorem}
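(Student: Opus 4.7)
The plan is to reduce the stated equivalence to a direct algebraic simplification of the right-hand side of \eqref{eqn:online.L.alpha.step}, since $\Pi_{\lambda}(\theta_k)$ and the rank-one correction $I_d + \lambda \eta_k \theta_k^{\top}$ are already explicitly known, and only the Euclidean gradient $\nabla_{\theta} f_k(\theta_k)$ needs to be computed and rewritten in the dual variables.

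First I would compute $\nabla_{\theta} f_k$ from \eqref{eqn:log.loss}, obtaining
\[
\nabla_{\theta} f_k(\theta) = \nabla_{\theta} \varphi(\theta) - \frac{y_k}{1 + \lambda \DP{\theta}{y_k}}.
\]
Next I would express $\nabla_{\theta}\varphi(\theta)$ through the $\lambda$-mirror map. Taking the inner product of \eqref{eqn:L.alpha.mirror.map} with $\theta$ yields $\DP{\eta}{\theta} = \DP{\nabla\varphi}{\theta}/(1-\lambda\DP{\nabla\varphi}{\theta})$, which rearranges to $1 - \lambda \DP{\nabla\varphi(\theta)}{\theta} = 1/\Pi_{\lambda}(\theta)$. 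Substituting back gives the clean identity $\nabla_{\theta}\varphi(\theta) = \eta/\Pi_{\lambda}(\theta)$, and hence
\[
\nabla_{\theta} f_k(\theta_k) = \frac{\eta_k}{\Pi_{\lambda}(\theta_k)} - \frac{y_k}{1 + \lambda \DP{\theta_k}{y_k}}.
\]

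With $a \defeq \DP{\theta_k}{\eta_k}$ and $b \defeq \DP{\theta_k}{y_k}$ (so $\Pi_{\lambda}(\theta_k) = 1 + \lambda a$), I would then compute $\theta_k^{\top}\nabla_{\theta} f_k(\theta_k) = \frac{a}{1+\lambda a} - \frac{b}{1+\lambda b} = \frac{a-b}{(1+\lambda a)(1+\lambda b)}$. Applying $I_d + \lambda \eta_k \theta_k^{\top}$ to $\nabla_{\theta} f_k(\theta_k)$ adds a multiple of $\eta_k$; the coefficient of $\eta_k$ becomes $\frac{1}{1+\lambda a} + \frac{\lambda(a-b)}{(1+\lambda a)(1+\lambda b)} = \frac{1}{1+\lambda b}$, so the two terms share the common denominator and collapse to $\frac{\eta_k - y_k}{1 + \lambda b}$.

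Finally, multiplying through by $\Pi_{\lambda}(\theta_k) = 1+\lambda a$ gives
\[
\Pi_{\lambda}(\theta_k)(I_d + \lambda \eta_k \theta_k^{\top}) \nabla_{\theta} f_k(\theta_k) = \frac{1 + \lambda \DP{\theta_k}{\eta_k}}{1 + \lambda \DP{\theta_k}{y_k}}\,(\eta_k - y_k),
\]
and substituting into \eqref{eqn:online.L.alpha.step} (with the sign flip turning $\eta_k - y_k$ into $y_k - \eta_k$) yields \eqref{eqn:F.lambda.online.step}. The only nontrivial step is the middle one: recognizing that the rank-one correction exactly converts the denominator $\Pi_{\lambda}(\theta_k)$ attached to $\eta_k$ into $1+\lambda\DP{\theta_k}{y_k}$, making the two vectors collinear. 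Once this algebraic cancellation is observed, the rest is bookkeeping.
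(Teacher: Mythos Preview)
Your proof is correct and follows essentially the same route as the paper: compute $\nabla_\theta f_k$, rewrite $\nabla_\theta\varphi$ as $\eta/\Pi_\lambda$, and then algebraically collapse $\Pi_\lambda(\theta_k)(I_d+\lambda\eta_k\theta_k^\top)\nabla_\theta f_k(\theta_k)$. The paper's proof is terser because it observes that this gradient has the same form as in Proposition~\ref{prop:primal.dual.flows}(ii) (with $y_k$ playing the role of $\eta^*$) and simply cites that computation; you have written out explicitly the ``after some simplification'' step that the paper leaves to the reader there.
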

\begin{proof}
Differentiating $f_k(\theta)$ in \eqref{eqn:log.loss}, we have
\[
\nabla_{\theta}f(\theta) = \frac{\eta}{1 + \lambda \DP{\theta}{\eta}}
- \frac{y_k}{1 + \lambda \DP{\theta}{y_k}},
\]
which has the same form as in the dual gradient flow in Proposition
\ref{prop:primal.dual.flows}(ii). (This is not a coincidence in view of the
duality between $\lambda$-exponential family and $\lambda$-logarithmic
divergence; see \cite[Section VI]{WZ21}.) Continuing the computation as in the
proof of Proposition \ref{prop:primal.dual.flows}, we obtain
\eqref{eqn:F.lambda.online.step} which is the discrete analogue of
\eqref{eqn:dual.flow.update}.
\end{proof}

Since \eqref{eqn:online.L.alpha.step} is a natural gradient update, by
\cite[Theorem 2]{a98} the algorithm (when $\delta_k = \frac{1}{k}$) is Fisher
efficient as $k \rightarrow \infty$. When $\lambda \rightarrow 0$, we recover
the linear update for exponential families derived in \cite{rm15}. In general,
an extra projection step, which is also necessary for the exponential family
($\lambda = 0$), is needed to retain $\theta_{k+1}$ in the parameter set
$\Theta$ (or $\eta_{k+1}$ in $H$). The adjustment is implemented in the
experiments below by a reflection across the boundary.

\begin{figure}[!t]
\centering
\includegraphics[width=2in]{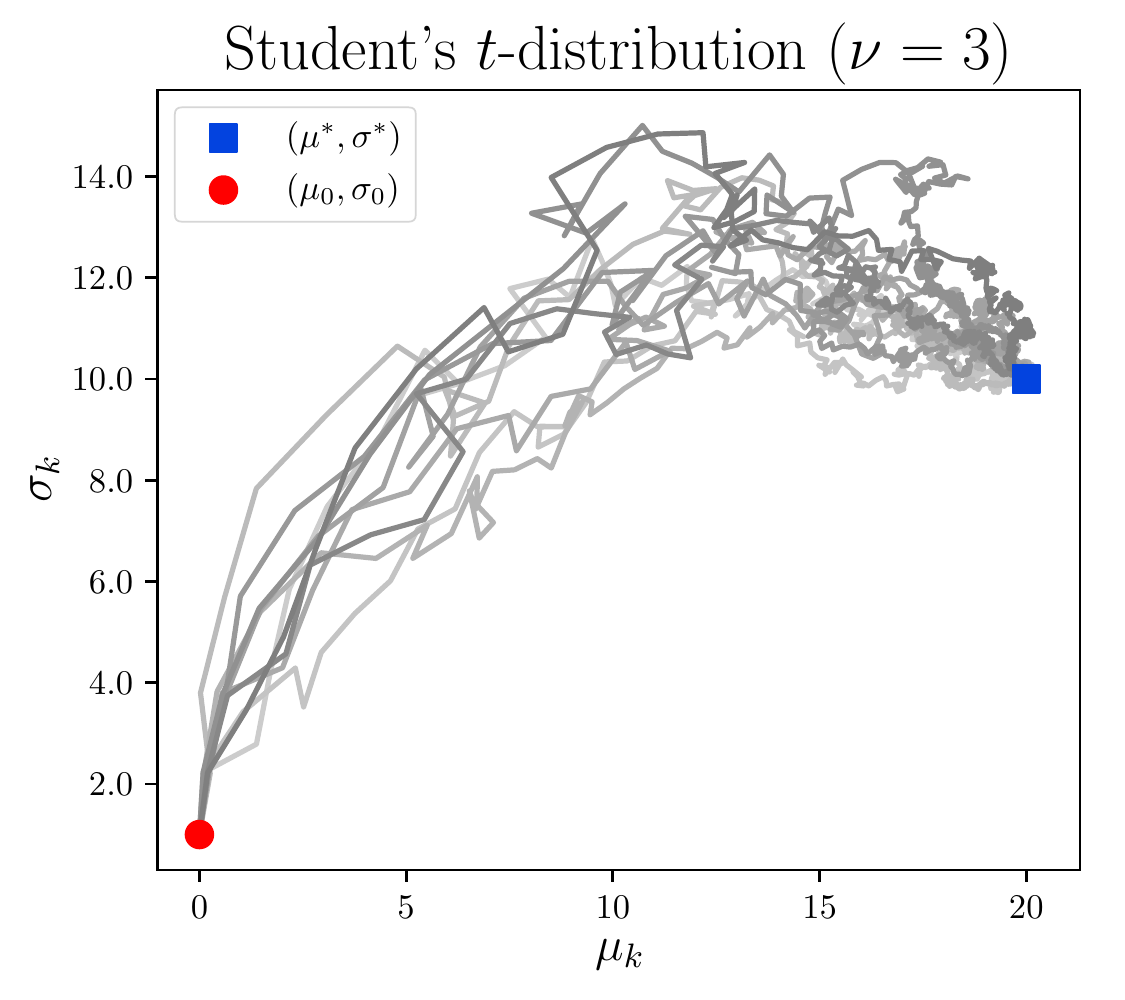}
\includegraphics[width=2in]{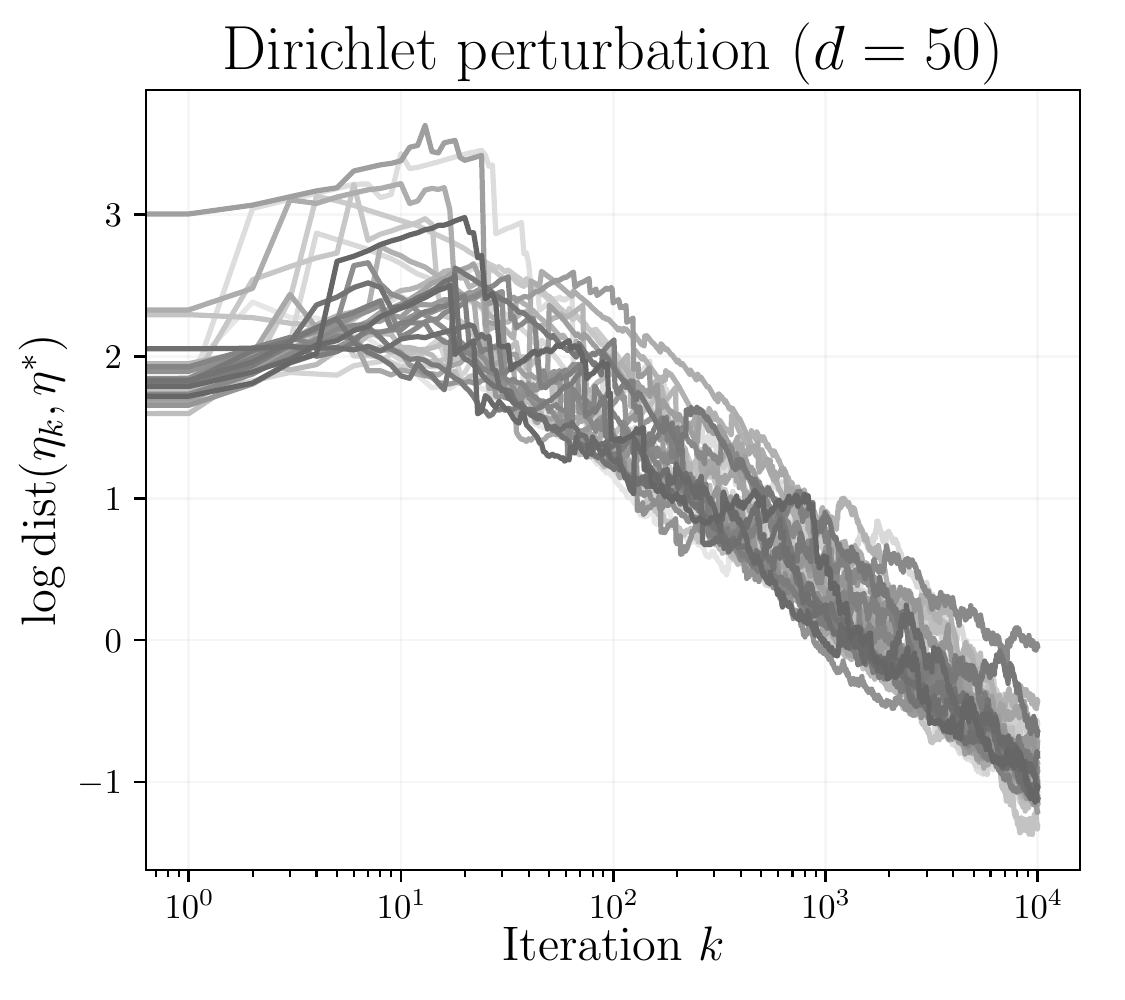}
\caption{{\bf Left:} $10$ trajectories of \eqref{eqn:F.lambda.online.step} for
the Student $t$-distribution location scale family \eqref{eqn:student-t} with
$\nu = 3$ degrees of freedom, where the learning rate is $\delta_k = 1/k$. We
show the dynamics of $(\mu_k, \sigma_k)$ over a sample of $10000$ data points.
Red dot: initial guess $(\mu_0, \sigma_0)$. Blue square: true parameter $(\mu^*,
\sigma^*)$. {\bf Right:} Plot of $\log \mathrm{dist}(\eta_{k}, \eta^{*})$
against $\log_{10} k$ for the Dirichlet perturbation model
(Example~\ref{eg:dir.perturbation.maintext}) over $30$ trajectories
of~\eqref{eqn:F.lambda.online.step}, each with $10000$ data points. Here
$\mathrm{dist}(\eta, \eta') = |\log (\eta) - \log (\eta')|$ is used as a metric
on the dual domain $H = (0, \infty)^d$. In this simulation $d = 50$, $\lambda
= -0.3$ and $\delta_k = 1/k$. We observe that $\mathrm{dist}(\eta_k, \eta^{*})$
decays like $O(k^{-1/2})$ which is consistent with the asymptotic efficiency of
online natural gradient learning.}
\label{fig:online-parest}
\end{figure}

\begin{example}[Student's $t$-distribution as a location-scale
  family]\label{eg:student-t}
Let $\nu > 0$ be a constant. The Student's $t$-distribution with $\nu$ degrees
of freedom has density on $\R$ given by
\begin{equation}\label{eqn:student-t}
p(x ;\mu,\sigma, \nu) = \frac{\Gamma((\nu + 1)/2)}{\Gamma(\nu/2) \sqrt{\nu \pi}
\sigma}\left( 1 + \frac{1}{\nu} \frac{(x-\mu)^2}{\sigma^2} \right)^{-(\nu+1)/2},
\end{equation}
where $\mu$ and $\sigma$ are the location and scale parameters, respectively,
and $\Gamma$ is the gamma function.\footnote{Here the dominating measure is the
Lebesgue measure on $\R$ and $\nu \in (0, \infty)$ denotes the degrees of
freedom.} In the following, we regard $\nu$ as known and consider online
estimation of $(\mu, \sigma)$.

Let $\lambda = \tfrac{-2}{\nu + 1} \in (-2, 0)$ and $F(x) = (x, x^2)^{\top}$.
Then we can express \eqref{eqn:student-t} as a $\lambda$-exponential family
$p_{\theta}(x) = (1 + \lambda \DP{\theta}{F(x)})^{1/\lambda}
e^{-\varphi(\theta)}$. The natural parameter $\theta$ is given by
\begin{equation*}
\theta = (\theta^1, \theta^2) = \left( \dfrac{2 \mu}{- \lambda \mu^{2}
+ \sigma^{2} \left(\lambda + 2\right)}, -\dfrac{1}{- \lambda \mu^{2}
+ \sigma^{2} \left(\lambda + 2\right)}\right),
\end{equation*}
and takes values in the set
\begin{equation*}
\Theta = \left\{\theta = (\theta^1,\theta^2) \in \R^2 : \theta^2 < 0 \text{ and
} \lambda(\theta^1)^2 - 4\theta^2 > 0\right\}.
  \end{equation*}
The potential function $\varphi$ is given on $\Theta$ by
\begin{equation*}
\varphi(\theta) = \log{\left( \frac{\sqrt{\frac{\lambda (\theta^1)^{2}
- 4 \theta^{2}}{\lambda + 2}}}{-2 \theta^{2}} \right)}
- \frac{1}{\lambda}\log{\left( \frac{-4 \theta^{2}}{\lambda (\theta^1)^{2}
- 4 \theta^{2}} \right)} + C,
  \end{equation*}
where $C$ is a constant depending only on $\nu$. By a straightforward
computation, we obtain explicit expressions of the $\lambda$-mirror map and its
inverse:
\begin{align*}
\eta &= \nabla_{\theta}^{(\lambda)}\varphi_{\lambda}(\theta) = \left(
\frac{-\theta^{1}}{2 \theta^{2}}, \frac{\lambda (\theta^{1})^{2}
+ (\theta^1)^{2} - 2 \theta^{2}}{2 (\lambda + 2) \theta_{2}^{2} } \right),\\
\theta &= \nabla_{\eta}^{(\lambda)}\psi(\eta) = \left( \frac{-2 \eta^{1}}{2
(\lambda + 1) (\eta^1)^2 - (\lambda + 2) \eta^{2}} , \frac{1}{2 (\lambda + 1)
(\eta^1)^2 - (\lambda + 2) \eta^{2}}\right).
  \end{align*}
In Figure \ref{fig:online-parest} (left), we show ten trajectories (in terms of
$(\mu_k, \sigma_k)$) of the algorithm \eqref{eqn:F.lambda.online.step} with
$\delta_k = 1/k$, where the true parameter is $(\mu^*, \sigma^*)$ and the
initial guess is $(\mu_0, \sigma_0)$. As expected, the iterates converge to
$(\mu^*, \sigma^*)$ as $k \rightarrow \infty$. The preceding computations can be
generalized to the multivariate location-scale $t$-distribution where the
degrees of freedom is also assumed to be known.
\end{example}

\begin{example} [Dirichlet perturbation on the unit simplex]
  \label{eg:dir.perturbation.maintext}
The Dirichlet perturbation model is a fundamental example of the
$\lambda$-exponential family (see \cite[Example 3.14]{WZ21}) and is closely
related to the {\it Dirichlet optimal transport problem} studied in \cite{pw16,
pw18, pw18b}; see also Section \ref{sub:Dirichlet} below, where we use the
Dirichlet transport to define gradient flows on the simplex. Fix $d \geq 1$ and
consider the open unit simplex
\begin{equation} \label{eqn:simplex}
\triangle^{1+d} = \left\{ p = (p^0, p^1, \ldots, p^d) \in (0, 1)^{1+d}: \sum_{i
= 0}^d p^i = 1 \right\}.
\end{equation}
Given $p, q \in \triangle^{1 + d}$, define the {\it perturbation operation} by
\begin{equation} \label{eqn:perturbation}
p \oplus q = \left( \frac{p^0 q^0}{\sum_{j=0}^d p^j q^j}, \ldots, \frac{p^d
q^d}{\sum_{j=0}^d p^j q^j} \right).
\end{equation}
This is the vector addition operation under the {\it Aitchison geometry} in
compositional data analysis \cite{EPMB03}. Let $\sigma > 0$ and let $\lambda
= -\sigma < 0$. Fix $p \in \triangle^{1 + d}$, which we regard as the unknown
parameter, and let $D = (D^0, \ldots, D^d)$ be a random vector whose
distribution is the Dirichlet distribution with parameters $(\sigma^{-1}/(1
+ d), \ldots, \sigma^{-1}/(1 + d)) \in (0, \infty)^{1+d}$. As $\sigma
\rightarrow 0$, the distribution of $D$ converges weakly to the point mass at
the barycenter $(1/(1 + d), \ldots, 1/(1 + d))$. Thus, we may regard $\sigma$ as
a noise parameter. The Dirichlet perturbation model is specified as
\begin{equation} \label{eqn:Dirichlet.perturbation}
Q = p \oplus D.
\end{equation}
It may be regarded as a multiplicative analogue of the Gaussian additive model
$Y = X + \epsilon$ where $\epsilon \sim N(0, \sigma^2 I_d)$.

By \cite[Proposition 3.16]{WZ21}, the distribution of $Q$ can be
expressed as a $\lambda$-exponential family with $\lambda = -\sigma < 0$, if we
let $F(q) = (q^1/q^0, \ldots, q^d/q^0)$ and $\theta = (p^0/\lambda p^1, \ldots,
p^0/\lambda p^d) \in \Theta = (-\infty, 0)^d$. By \cite[(III.30)]{WZ21}, the
potential function is given by
\[
\varphi(\theta) = \frac{1}{\lambda(1 + d)} \sum_{i = 1}^d \log (-\theta^i).
\]
Letting $d = 1$ and $\lambda = -1$ (and replacing $\theta$ by $-\theta$),
recovers the first example in Table \ref{tab:examples}. The dual variable $\eta$
is given by $\eta^i = \frac{1}{\lambda \theta^i} = \frac{p^i}{p^0}$. In
Figure~\ref{fig:online-parest} (right), we illustrate the $O(k^{-1/2})$
convergence rate of the online estimation algorithm
\eqref{eqn:F.lambda.online.step}. In fact, it can be verified that the update
\eqref{eqn:F.lambda.online.step}, when expressed in terms of $p_k$ (the current
estimate of $p$) and $q_{k}$ (the new data point) with values in $\triangle^{1
+ d}$, is independent of the value of $\lambda < 0$. Thus, for online estimation
of the Dirichlet perturbation model, we may treat $\sigma > 0$ (or $\lambda
< 0$) as {\it unknown}.
\end{example}

\section{Gradient flows on the simplex via Dirichlet transport}
\label{sub:Dirichlet}
By Brenier's theorem \cite{b91}, the mirror map $\zeta = \nabla \phi(\theta)$ in
classical (Bregman) mirror descent can be interpreted as an optimal transport
map for the quadratic cost $c(x, y) = \frac{1}{2}|x - y|^2$. Also, the Bregman
divergence is the $c$-divergence of the quadratic cost. This suggests an
interpretation of mirror descent in terms of optimal transport. In fact, our
conformal mirror descent generalizes this set-up to the logarithmic cost
$c_{\lambda}(x, y) = \frac{-1}{\lambda} \log (1 + \lambda \DP{x}{y})$ for
$\lambda \neq 0$. In this section, we specialize to the unit simplex and the
case $\lambda = -1$. Using the {\it Dirichlet optimal transport problem} studied
in \cite{pw18b}, we define a family of gradient flows on the unit simplex and
compare them with the entropic descent, which is an important example of mirror
descent.

\subsection{Dirichlet transport}
Following \cite{pw18b}, we define the {\it Dirichlet cost function} on
$\triangle^{n} \times \triangle^{n}$ (where $n = 1 + d \geq 2$) by
\begin{equation} \label{eqn:Dirichlet.cost}
c(p, q) = \log \left(  \sum_{i = 0}^{n-1} \frac{1}{n} \frac{q^i}{p^i} \right)
- \sum_{i = 0}^{n-1} \frac{1}{n} \log \frac{q^i}{p^i}.
\end{equation}
It is closely related to the Dirichlet perturbation model in Example
\ref{eg:dir.perturbation.maintext}, because the density of $Q$ (with respect to
a suitable reference measure) is proportional to $e^{\frac{1}{\lambda} c(p,
q)}$ \cite[Remark 6]{pw18b}. It is easy to verify that $c(p, q) = {\bf L}_{-1,
\varphi}[q : p]$ where $\varphi(p) = - \sum_{i = 0}^{n-1} \frac{1}{n} \log p_i$
is  $c_{-1}$-convex on $\triangle^n$. Up to a change of variables and addition
of linear terms (see \cite[Remark 3]{w18}), the Dirichlet cost function is
equivalent to the logarithmic cost $c_{-1}$.
The ($-1$)-mirror map then corresponds to the optimal transport map of the
Dirichlet transport.

We now adapt the logarithmic divergence and the $(-1)$-mirror map to the simplex
following the notations of \cite{pw18b}. The role of the $c_{-1}$-convex
generator is now played by an {\it exponentially concave} function.

\begin{definition}[Exponentially concave function]
A smooth function $\varphi: \triangle^n \rightarrow \mathbbm{R}$ is said to be
exponentially concave if $e^{\varphi}$ is concave. Given such a function, we
define its $L$-divergence by
\begin{equation} \label{eqn:L.divergence}
{\bf L}_{\varphi}[q : p] = \log( 1  + \DP{\nabla \varphi(p)}{q - p})
- (\varphi(q) - \varphi(p)).
\end{equation}
\end{definition}

It is easy to see that if $\varphi$ is exponentially concave, then $-\varphi$ is
$c_{-1}$-convex and ${\bf L}_{\varphi} = {\bf L}_{-1, -\varphi}$. In order that
the induced Riemannian metric is well-defined, we assume that $\nabla^2
e^{\varphi}$ is strictly negative definite when restricted to the tangent space
of $\triangle^{n}$. The ($-1$)-mirror map is now given in terms of the optimal
transport map of the Dirichlet transport. We let $e_0, \ldots, e_{n-1}$ denote
the standard Euclidean basis. Given a differentiable function $f$ on
$\triangle^n$, define the directional derivative
\[
\widetilde{\nabla}_i f(p) = \DP{\nabla f(p)}{e_i-p}, \quad i = 0, \ldots, n - 1.
\]
Recall the perturbation operation defined by \eqref{eqn:perturbation}. We also
define the {\it powering operation} for $p \in \triangle^n$ and $\alpha \in
\mathbbm{R}$ by
\[
\alpha \otimes p = \left( \frac{(p^0)^{\alpha}}{\sum_{j = 0}^{n-1}
(p^j)^{\alpha}}, \ldots, \frac{(p^{n-1})^{\alpha}}{\sum_{j = 0}^{n-1}
(p^j)^{\alpha}} \right).
\]
Note that $\triangle^n$ is an $(n - 1)$-dimensional real vector space under the
operations $\oplus$ and $\otimes$. We define $\ominus p = (-1) \otimes p$ be the
additive inverse of $p$.

\begin{definition}[Portfolio and optimal transport maps]
Given the exponentially concave generator $\varphi$, we define the portfolio map
$\pi_{\varphi} : \triangle^n \rightarrow \triangle^n$ by
\begin{equation} \label{eq:portfolio}
(\pi_{\varphi}(p))^i = p^i \left(1 + \widetilde{\nabla}_i \varphi(p) \right),
\quad i = 0, \ldots, n - 1.
\end{equation}
 The optimal transport map $T_{\varphi}: \triangle^n \rightarrow \triangle^n$ is
 defined by
\begin{equation} \label{eqn:Dirichlet.transport.map}
q = T_{\varphi}(p) = p \oplus \pi_{\varphi}(\ominus p).
\end{equation}
\end{definition}

That $T_{\varphi}$ is an optimal transport map for the Dirichlet cost function
\eqref{eqn:Dirichlet.cost} is proved in \cite[Theorem 1]{pw18b}, which is an
analogue of Brenier's theorem. The terminology ``portfolio map'' for the mapping
$\pi_{\varphi}$ is motivated by its use in portfolio theory
\cite{fernholz2002stochastic, pw16, w19}.

\begin{example}[Examples of portfolio and transport maps]
\label{ex:portfolios.examples}
\begin{enumerate}
    \item[(i)] Let $\varphi(p) = \sum_{i = 0}^{n-1} \frac{1}{n} \log p_i$. Then
      the associated portfolio map is the constant map $\pi_{\varphi}(p)
      = \left( \frac{1}{n}, \ldots, \frac{1}{n} \right)$ called the {\it
      equal-weighted portfolio}. From \eqref{eqn:Dirichlet.transport.map}, the
      transport map is the identity $T_{\varphi}(p) = p$. This function
      corresponds to the quadratic function $\frac{1}{2}|x|^2$ whose Euclidean
      gradient is the identity.
    \item[(ii)] Let $\varphi(p) = \frac{1}{\alpha} \log \left( \sum_{j
      = 0}^{n-1} (p^i)^{\alpha} \right)$ where $\alpha \in (-\infty, 1)$ is
      a fixed parameter. Then $\pi_{\varphi}(p) = \alpha \otimes p$ is called
      the {\it diversity-weighted portfolio}. The transport map is given by
      $T_{\varphi}(p) = (1 - \alpha) \otimes p$ which can be interpreted as
      a dilation under the Aitchison geometry. As $\alpha \rightarrow 0$ we
      recover the identity transport.
\end{enumerate}
\end{example}

Let $f: \triangle^n \rightarrow \mathbbm{R}$ be a differentiable function. Using
the Riemannian metric $g$ induced by ${\bf L}_{\varphi}$, we can define the
gradient flow
\begin{equation} \label{eqn:Dirichlet.gradient.flow}
\dv{}{t} p_t = -\mathrm{grad}_{g} f(p_t),
\end{equation}
which is a special case of \eqref{eqn:L.lambda.flow} (up to reparameterization)
when $\lambda = -1$. The following result derives explicitly the dynamics under
the dual variable $q_t = T_{\varphi}(p_t)$ defined in terms of the transport
map. We omit the proof as it is a straightforward, but tedious computation.

\begin{theorem}[Conformal mirror descent on $\triangle^d$ under Dirichlet
  transport]
\label{thm:cmd-dt-supp}
Consider the gradient flow \eqref{eqn:Dirichlet.gradient.flow}, and let $q_t
= T_{\varphi}(p_t)$. Then for $i = 0, \ldots, n - 1$, we have
\begin{equation} \label{eqn:gradient.flow.simplex.done}
	\dv{}{t} \log q_t^i = \frac{-p_t^i}{\pi_{\varphi}^i(\ominus p_t)} \left[
  \widetilde{\nabla}_i f(p_t) - q_t^i \sum_{j = 0}^{n-1} \left(
\frac{p_t^j}{p_t^i}\right)^2 \widetilde{\nabla}_j  f(p_t) \right].
\end{equation}
\end{theorem}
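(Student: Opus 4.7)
The plan is to specialize Theorem \ref{thm:flow.main1} to the case $\lambda = -1$ with generator $-\varphi$, exploiting the identity ${\bf L}_{\varphi} = {\bf L}_{-1, -\varphi}$ which holds whenever $\varphi$ is exponentially concave. This lets us re-use the dual-variable dynamics \eqref{eqn:L.alpha.flow.alternative} verbatim; the only work is to express the abstract quantities $(\theta_t, \eta_t, \Pi_{\lambda})$ in the simplex-intrinsic language of $(p_t, q_t, \pi_{\varphi})$.

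First, I would fix a chart on $\triangle^{n}$, for instance by taking $(p^{1},\dots,p^{n-1})$ as free coordinates with $p^{0} = 1 - \sum_{i=1}^{n-1} p^{i}$, so that the abstract convex domain $\Theta \subset \R^{d}$ (with $d = n-1$) of Section \ref{sec:logarithmic_mirror_descent} corresponds to this chart. Under this identification I would verify that the $c_{-1}$-gradient $\nabla^{(-1)}_{\theta}(-\varphi)$ coincides, after pulling back to $\triangle^{n}$, with the Dirichlet optimal transport map $T_{\varphi}$ defined in \eqref{eqn:Dirichlet.transport.map}; this follows from the formula \eqref{eq:portfolio} for the portfolio map together with the analogue of Brenier's theorem proved in \cite{pw18b}. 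Consequently the dual variable $\eta_{t}$ in Theorem \ref{thm:flow.main1} may be identified with $q_{t} = T_{\varphi}(p_{t})$.

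Second, I would apply \eqref{eqn:L.alpha.flow.alternative} directly. With $\lambda = -1$ and generator $-\varphi$ it reads
\[
\dv{}{t}\eta_{t} = -\Pi_{-1}(\theta_{t})\bigl(I_{d} - \eta_{t}\theta_{t}^{\top}\bigr)\nabla_{\theta} f(\theta_{t}).
\]
Converting Euclidean partial derivatives in the chart into the intrinsic directional derivatives $\widetilde{\nabla}_{i} f(p) = \DP{\nabla f(p)}{e_{i} - p}$ is a chain-rule computation: the subtraction of $p$ in $e_{i} - p$ records exactly the projection onto the tangent space of the simplex that the chart implicitly performs. Dividing the resulting expression componentwise by $q_{t}^{i}$ produces $\dv{}{t}\log q_{t}^{i}$ on the left-hand side. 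A key algebraic identity to establish is that the prefactor $\Pi_{-1}(\theta)/q^{i}$ simplifies to $-p^{i}/\pi_{\varphi}^{i}(\ominus p)$; this is immediate from expanding $\pi_{\varphi}(\ominus p)$ via \eqref{eq:portfolio} and using the defining relation $1 + \DP{\theta}{\eta}$ of $\Pi_{-1}$ together with the identification $\eta = q$. The rank-one correction $I_{d} - \eta \theta^{\top}$ then produces the bracketed expression $\widetilde{\nabla}_{i} f(p) - q^{i} \sum_{j} (p^{j}/p^{i})^{2}\widetilde{\nabla}_{j} f(p)$, the weights $(p^{j}/p^{i})^{2}$ arising when the Euclidean inner product $\DP{\theta}{\nabla_{\theta} f}$ in the chart is rewritten using the $\widetilde{\nabla}_{j} f$'s.

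The main obstacle is the bookkeeping between the abstract theorem (stated on an open convex subset of $\R^{d}$ with $d$ free parameters and ordinary Euclidean gradients) and the simplex (which is $(n-1)$-dimensional but whose natural description uses all $n$ components under the $\oplus,\otimes$ operations). In particular, verifying that the chain-rule contributions combine to give precisely the $(p^{j}/p^{i})^{2}$ weights, and that the Jacobian of the chart-to-simplex map is absorbed cleanly into $\pi_{\varphi}^{i}(\ominus p)$, is tedious — which is why the authors remark that the computation is straightforward but omit it. Once these identifications are nailed down, the theorem is just a rewriting of Theorem \ref{thm:flow.main1} in the notation native to Dirichlet transport.
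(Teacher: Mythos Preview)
The paper explicitly omits the proof, stating only that it ``is a straightforward, but tedious computation,'' so there is no detailed argument to compare against. Your strategy of specializing Theorem~\ref{thm:flow.main1} with $\lambda=-1$ and generator $-\varphi$, then translating $(\theta,\eta,\Pi_{-1})$ into the simplex-native objects $(p,q,\pi_{\varphi})$, is exactly the approach the surrounding text invites: the paper remarks that \eqref{eqn:Dirichlet.gradient.flow} is a special case of \eqref{eqn:L.lambda.flow} ``(up to reparameterization)'' and that the $(-1)$-mirror map ``corresponds to the optimal transport map of the Dirichlet transport.'' Your identification of the bookkeeping between the chart on $\triangle^{n}$ and the abstract $\Theta\subset\R^{d}$ as the sole nontrivial step is accurate; note in particular that the paper's caveat ``up to a change of variables and addition of linear terms'' means the map $\eta\leftrightarrow q$ is not literally the identity in the chart, so the verification that the prefactor becomes $-p^{i}/\pi_{\varphi}^{i}(\ominus p)$ and that the rank-one term produces the $(p^{j}/p^{i})^{2}$ weights does require carrying those adjustments through carefully.
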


\begin{example}
Consider the equal-weighted portfolio in \eqref{ex:portfolios.examples}. Then
$q_t = T_{\varphi}(p_t) = p_t$ and corresponding gradient flow
\eqref{eqn:gradient.flow.simplex.done} is given by
\[
\dv{t} \log \frac{p_t^i}{p_t^j} = - n \left[ p_t^i \widetilde{\nabla}_i f(p_t)
- p_t^j \widetilde{\nabla}_j f(p_t) \right].
\]
This motivates the multiplicative discrete update
\begin{equation*}
p_{k+1}^i = \frac{ p_k^i \exp \left( -\delta_k p_k^i \widetilde{\nabla}_i
f(p_t)\right)}{\sum_{j = 0}^{n-1}  p_k^j \exp \left( -\delta_k p_k^j
\widetilde{\nabla}_j f(p_t)\right)}.
\end{equation*}
This is similar to but different from the {\it entropic descent} (Bregman mirror
descent on $\triangle^n$ induced by the negative Shannon entropy) whose update
is given by
\begin{equation} \label{eqn:entropic.descent}
p_{k+1}^i = \frac{ p_k^i \exp \left( -\delta_k \nabla_i f(p_t)\right)}{\sum_{j
= 0}^{n-1}  p_k^j \exp \left( -\delta_k \nabla_j f(p_t)\right)},
\end{equation}
where $\nabla_i f$ is the $i$-th component of $\nabla f$.
\end{example}

\begin{figure}[t!]
  \centering
  \begin{subfigure}{0.49\textwidth}
  \begin{center}
    \includegraphics[width=\textwidth]{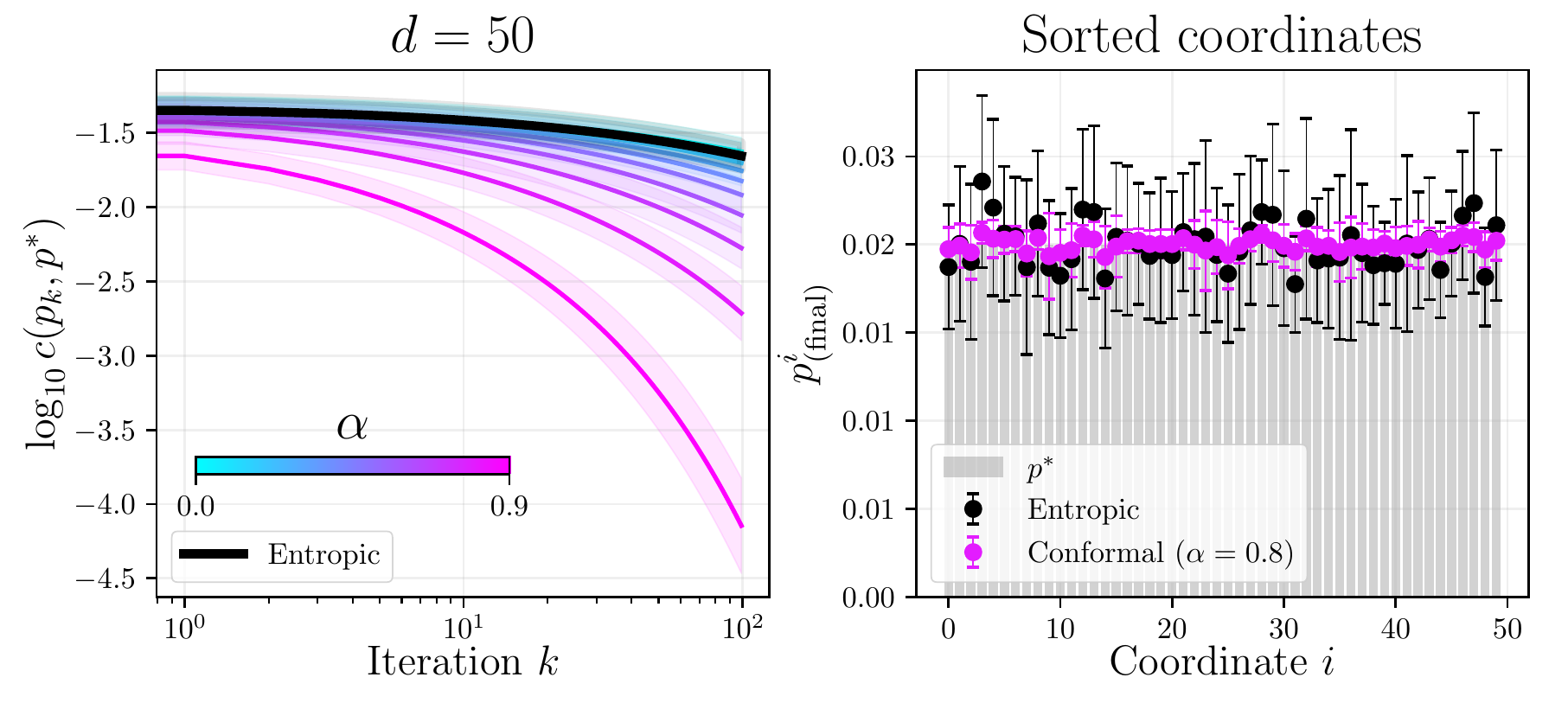}
  \end{center}
  \caption{}
  \label{fig:md-dc-center}
  \end{subfigure}
  \begin{subfigure}{0.49\textwidth}
  \begin{center}
    \includegraphics[width=\textwidth]{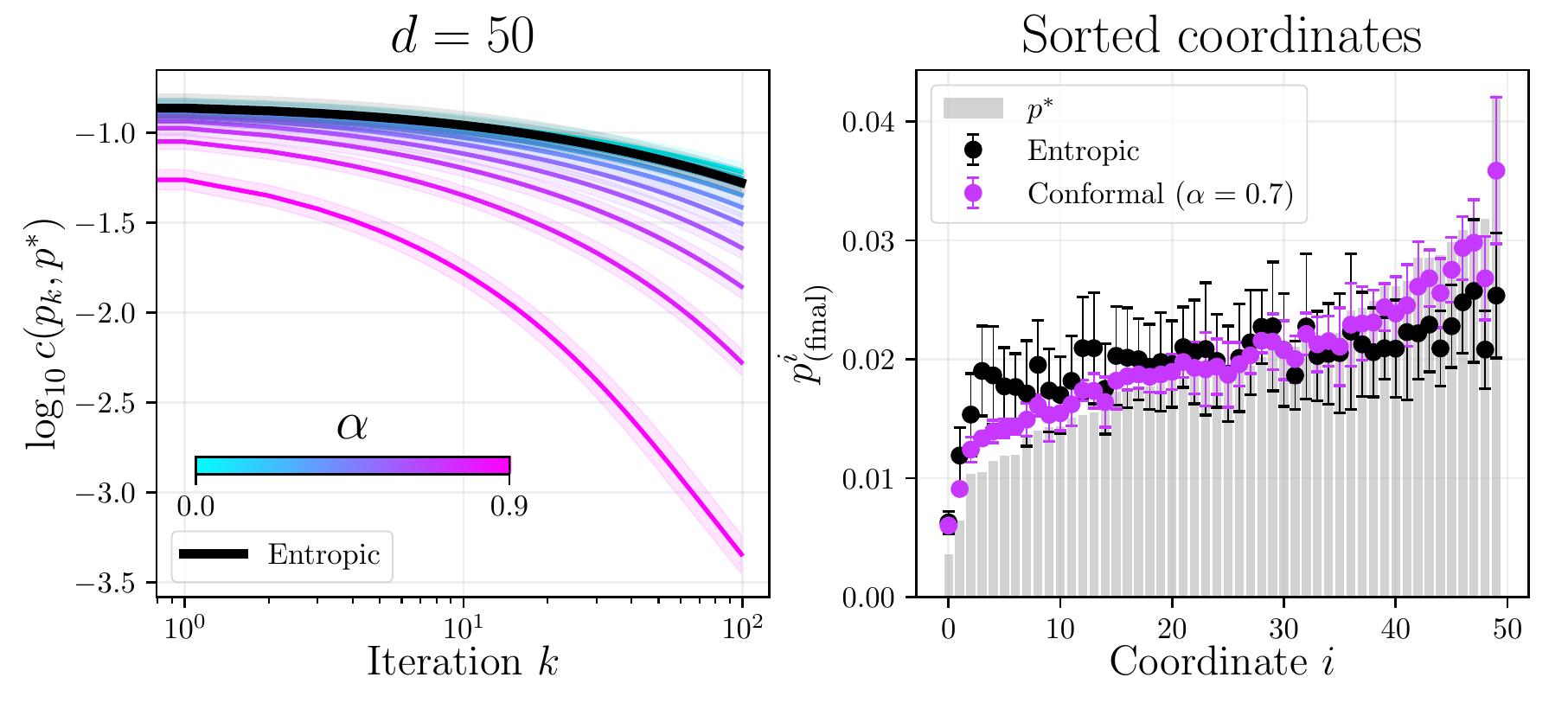}
  \end{center}
  \caption{}
  \label{fig:md-dc-dirichlet-2.pdf}
  \end{subfigure}
  \begin{subfigure}{0.49\textwidth}
  \begin{center}
    \includegraphics[width=\textwidth]{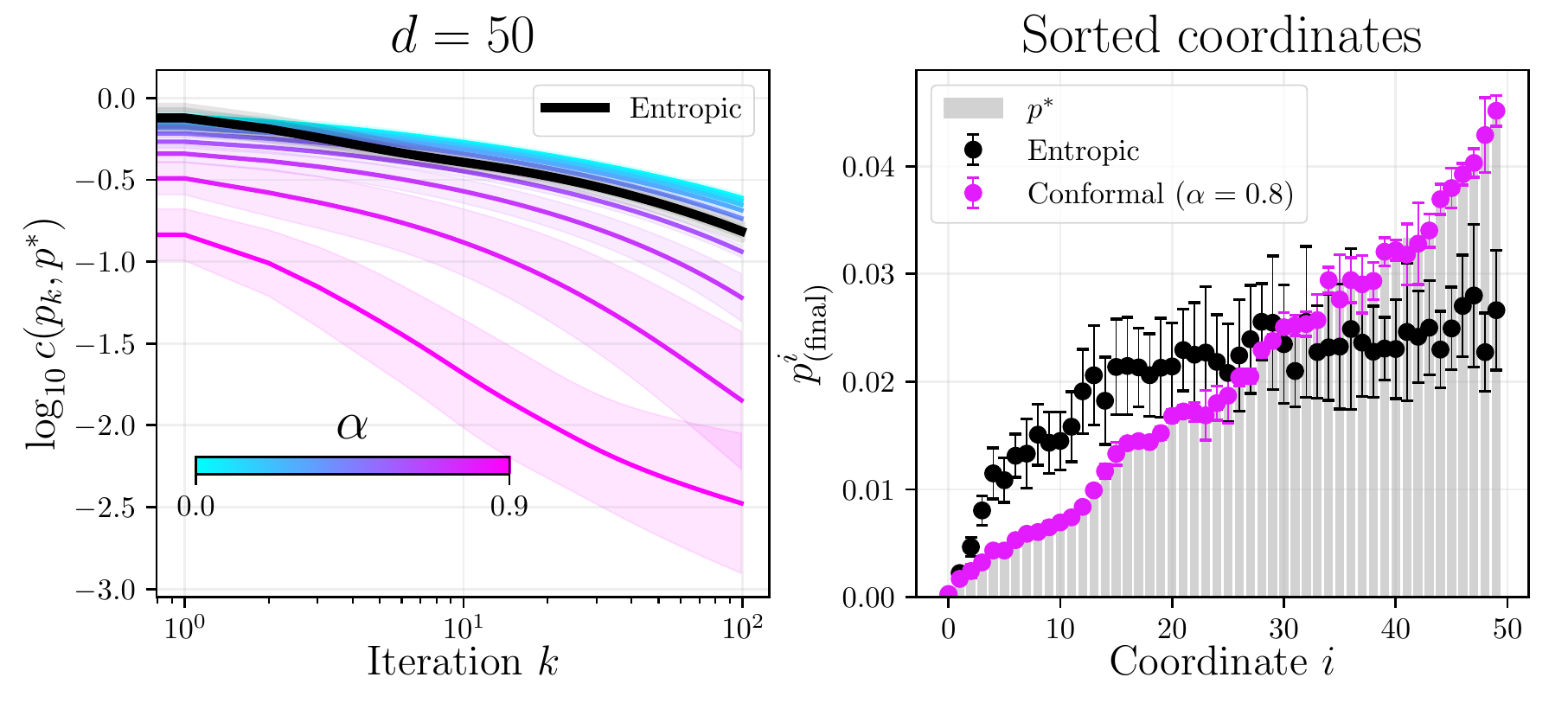}
  \end{center}
  \caption{}
  \label{fig:md-dc-dirichlet-random}
  \end{subfigure}
  \begin{subfigure}{0.49\textwidth}
  \begin{center}
    \includegraphics[width=\textwidth]{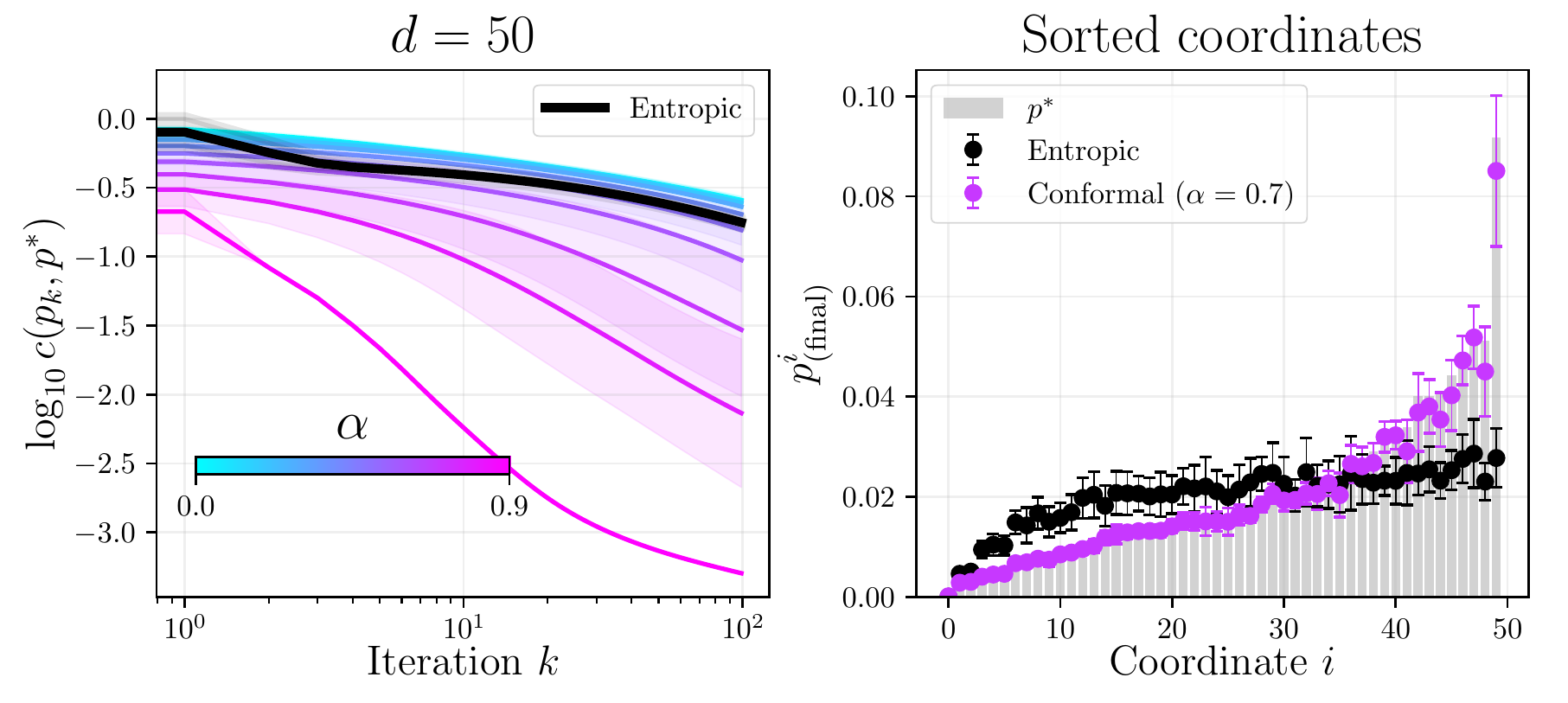}
  \end{center}
  \caption{}
  \label{fig:md-dc-dirichlet-1.pdf}
  \end{subfigure}
  \caption{Convergence rates (left) and final estimates $p_{(\text{final})}$
  (right) of $f(p_k) = c(p_k, p^*)$ for the entropic and conformal descents
  using step size $\delta_k = \frac{1}{d\sqrt{k}}$, for various targets $p^*$ that
  were randomly chosen and fixed. In Figure~\ref{fig:md-dc-center}, $p^*$ is the
  barycenter on $\triangle^d$, whereas in Figures
  ~\ref{fig:md-dc-dirichlet-2.pdf}-\ref{fig:md-dc-dirichlet-1.pdf}, $p^*$ was
  sampled from a Dirchlet distribution with varying parameters.
  For both the entropic and conformal descents, we plot the average over $12$
  randomized initial points $p_0$ (for $p^*$ fixed).}
  \label{fig:md-dc}
\end{figure}

\begin{example}
Consider minimization of the function $f(p) = c(p, p^*)$ where $c$ is the
Dirichlet cost function defined by \eqref{eqn:Dirichlet.cost} and $p^*$ is
fixed. In this experiment, we generate $p^*$ randomly according to various
distributions on $\triangle^n$. We implement
\eqref{eqn:gradient.flow.simplex.done} using the forward Euler discretization
for the diversity-weighted portfolio (Example \ref{ex:portfolios.examples}(ii))
where $\alpha \in \{0, \ldots, 0.9\}$, and compare the performance with that of
the entropic descent \eqref{eqn:entropic.descent}. The results are shown in
Figure~\ref{fig:md-dc}. Values of $\alpha$ closer to $1$ perform better than the
entropic descent across all settings, and recover the minimizer $p^*$
considerably more accurately.
\end{example}

\section{Discussion and future directions} \label{sec:conclusion}
Convex duality and Bregman divergence underlie much of the theory and
applications of classical information geometry. In this paper, we use the
$\lambda$-duality and the associated logarithmic divergence to propose
a tractable gradient flow called the conformal mirror descent. We demonstrate
its usefulness in online parameter estimation and gradient flows on the simplex.
Here, we discuss other related work and some directions for future research.

In this paper, we generalize the Hessian gradient flow primarily from the
information-geometric point of view. Being a fundamental first-order
optimization method, mirror descent has been studied and generalized in many
directions. For instance, convergence of many discrete and continuous time
descent algorithms was studied using Lyapunov arguments in \cite{wilson18}. In
\cite{krichene2015acceleratedmirror}, a family of accelerated mirror descent
algorithms with quadratic convergence was proposed. Likewise,
\cite{wibisono2016variational} presents a unifying analysis of accelerated
descent using variational methods. A future avenue is to explore accelerated
variants of the conformal mirror flow, and to interpret these using
information-geometric frameworks; one such exploration is presented by
\cite{defazio2019curved}.

Mirror descent provides a concrete framework to understand seemingly unrelated
optimization algorithms. Recent lines of work
\cite{mishchenko2019sinkhorn,leger2020agradient,mensch2020onlinesinkhorn} have
analyzed and interpreted the popular Sinkhorn algorithm
\cite{10.1214/aoms/1177703591} -- an iterative scheme used for solving the
entropic optimal transport problem \cite{cuturi2013sinkhorndistances} -- as
a form of mirror descent. Our conformal mirror descent may be applied to develop
new algorithms for regularized optimal transport problems and analyzing their
convergence properties.

Statistical inference and machine learning involving generalized exponential
families is the subject of a recent line of work, for
e.g.~\cite{ding2013statistical, gayen2021projection}. We expect that
$\lambda$-duality and logarithmic divergences will be useful in this endeavor.
Nevertheless, the current framework (as in \cite{WZ21}) assumes that the
curvature parameter $\lambda$ is given and known (except in special cases such
as the Dirichlet perturbation model in Example
\ref{eg:dir.perturbation.maintext}). A natural direction is to develop
data-driven methods to choose $\lambda$ (and analogous quantities for other
generalized exponential families).

The $\lambda$-duality is a special case of the $c$-duality in optimal transport
when $c = c_{\lambda}$ is the logarithmic cost given by \eqref{eqn:log.cost}.
While the $\lambda$-duality is particularly tractable, efficient algorithms
related to general $c$-duality will likely open up many new applications. For
example, the recent paper \cite{CAL21} used $c$-convexity to define normalizing
flows on Riemannian manifolds. It is also natural to analyze similarly derived
gradient flows with respect to other cost functions. We hope our results will
motivate and inspire further work in applications of generalized $c$-convex
duality.

\section*{Acknowledgement}
The research of L.~Wong is partially supported by an NSERC Discovery Grant
(RGPIN-2019-04419) and a Connaught New Researcher Award. F.~Rudzicz is partially
supported by a CIFAR Chair in Artificial Intelligence.

\section*{Data availability}
The data used in this paper was simulated and the codes are available upon
request.

\bibliographystyle{abbrv}
\small{\bibliography{references}}

\begin{thebibliography}{10}

\bibitem{ABB04}
F.~Alvarez, J.~Bolte, and O.~Brahic.
\newblock Hessian {R}iemannian gradient flows in convex programming.
\newblock {\em SIAM Journal on Control and Optimization}, 43(2):477--501, 2004.

\bibitem{a98}
S.-I. Amari.
\newblock Natural gradient works efficiently in learning.
\newblock {\em Neural Computation}, 10(2):251--276, 1998.

\bibitem{a16}
S.-I. Amari.
\newblock {\em Information {G}eometry and {I}ts {A}pplications}.
\newblock Springer, 2016.

\bibitem{amari2021information}
S.-i. Amari.
\newblock Information geometry.
\newblock {\em Japanese Journal of Mathematics}, 16(1):1--48, 2021.

\bibitem{an00}
S.-I. Amari and H.~Nagaoka.
\newblock {\em Methods of Information Geometry}.
\newblock American Mathematical Society, 2000.

\bibitem{ajvs17}
N.~Ay, J.~Jost, H.~V{\^a}n~L{\^e}, and L.~Schwachh{\"o}fer.
\newblock {\em Information Geometry}.
\newblock Springer, 2017.

\bibitem{bmdg05}
A.~Banerjee, S.~Merugu, I.~S. Dhillon, and J.~Ghosh.
\newblock Clustering with {B}regman divergences.
\newblock {\em Journal of Machine Learning Research}, 6(Oct):1705--1749, 2005.

\bibitem{am03}
A.~Beck and M.~Teboulle.
\newblock Mirror descent and nonlinear projected subgradient methods for convex
  optimization.
\newblock {\em Operations Research Letters}, 31(3):167--175, 2003.

\bibitem{b67}
L.~M. Bregman.
\newblock The relaxation method of finding the common point of convex sets and
  its application to the solution of problems in convex programming.
\newblock {\em USSR Computational Mathematics and Mathematical Physics},
  7(3):200--217, 1967.

\bibitem{b91}
Y.~Brenier.
\newblock Polar factorization and monotone rearrangement of vector-valued
  functions.
\newblock {\em Communications on Pure and Applied Mathematics}, 44(4):375--417,
  1991.

\bibitem{CAL21}
S.~Cohen, B.~Amos, and Y.~Lipman.
\newblock Riemannian convex potential maps.
\newblock In {\em International Conference on Machine Learning}, pages
  2028--2038. PMLR, 2021.

\bibitem{cds02}
M.~Collins, S.~Dasgupta, and R.~E. Schapire.
\newblock A generalization of principal components analysis to the exponential
  family.
\newblock In {\em Advances in Neural Information Processing Systems}, pages
  617--624, 2002.

\bibitem{css02}
M.~Collins, R.~E. Schapire, and Y.~Singer.
\newblock Logistic regression, adaboost and bregman distances.
\newblock {\em Machine Learning}, 48(1-3):253--285, 2002.

\bibitem{cuturi2013sinkhorndistances}
M.~Cuturi.
\newblock Sinkhorn distances: Lightspeed computation of optimal transport.
\newblock In {\em Advances in Neural Information Processing Systems}, pages
  2292--2300, 2013.

\bibitem{defazio2019curved}
A.~Defazio.
\newblock On the curved geometry of accelerated optimization.
\newblock {\em Advances in Neural Information Processing Systems}, 32, 2019.

\bibitem{ding2013statistical}
N.~Ding.
\newblock {\em Statistical machine learning in the $t$-exponential family of
  distributions}.
\newblock PhD thesis, Purdue University, 2013.

\bibitem{EPMB03}
J.~J. Egozcue, V.~Pawlowsky-Glahn, G.~Mateu-Figueras, and C.~Barcelo-Vidal.
\newblock Isometric logratio transformations for compositional data analysis.
\newblock {\em Mathematical Geology}, 35(3):279--300, 2003.

\bibitem{EK22}
S.~Eguchi and O.~Komori.
\newblock {\em Minimum Divergence Methods in Statistical Machine Learning}.
\newblock Springer, Tokyo, 2022.

\bibitem{fernholz2002stochastic}
E.~R. Fernholz.
\newblock {\em Stochastic Portfolio Theory}.
\newblock Springer, 2002.

\bibitem{fujiwara1995gradient}
A.~Fujiwara and S.-i. Amari.
\newblock Gradient systems in view of information geometry.
\newblock {\em Physica D: Nonlinear Phenomena}, 80(3):317--327, 1995.

\bibitem{gayen2021projection}
A.~Gayen and M.~A. Kumar.
\newblock Projection theorems and estimating equations for power-law models.
\newblock {\em Journal of Multivariate Analysis}, 184:104734, 2021.

\bibitem{GWS21}
S.~Gunasekar, B.~Woodworth, and N.~Srebro.
\newblock Mirrorless mirror descent: A natural derivation of mirror descent.
\newblock In {\em International Conference on Artificial Intelligence and
  Statistics}, pages 2305--2313. PMLR, 2021.

\bibitem{km10}
Y.-H. Kim and R.~J. McCann.
\newblock Continuity, curvature, and the general covariance of optimal
  transportation.
\newblock {\em Journal of the European Mathematical Society}, 12(4):1009--1040,
  2010.

\bibitem{krichene2015acceleratedmirror}
W.~Krichene, A.~Bayen, and P.~L. Bartlett.
\newblock Accelerated mirror descent in continuous and discrete time.
\newblock In C.~Cortes, N.~Lawrence, D.~Lee, M.~Sugiyama, and R.~Garnett,
  editors, {\em Advances in Neural Information Processing Systems}, volume~28,
  pages 2845--2853. Curran Associates, Inc., 2015.

\bibitem{km20}
M.~A. Kumar and K.~V. Mishra.
\newblock Cram\'er-{R}ao lower bounds arising from generalized {C}sisz\'ar
  divergences.
\newblock {\em Information Geometry}, 3:33--59, 2020.

\bibitem{leger2020agradient}
F.~L{\'{e}}ger.
\newblock A gradient descent perspective on sinkhorn.
\newblock {\em Applied Mathematics \& Optimization}, 7 2020.

\bibitem{mensch2020onlinesinkhorn}
A.~Mensch and G.~Peyr\'{e}.
\newblock Online sinkhorn: Optimal transport distances from sample streams.
\newblock In H.~Larochelle, M.~Ranzato, R.~Hadsell, M.~F. Balcan, and H.~Lin,
  editors, {\em Advances in Neural Information Processing Systems}, volume~33,
  pages 1657--1667. Curran Associates, Inc., 2020.

\bibitem{mishchenko2019sinkhorn}
K.~Mishchenko.
\newblock Sinkhorn algorithm as a special case of stochastic mirror descent.
\newblock {\em arXiv preprint arXiv:1909.06918}, 2019.

\bibitem{murata2004information}
N.~Murata, T.~Takenouchi, T.~Kanamori, and S.~Eguchi.
\newblock Information geometry of {U}-{B}oost and {B}regman divergence.
\newblock {\em Neural Computation}, 16(7):1437--1481, 2004.

\bibitem{na82}
H.~Nagaoka and S.-I. Amari.
\newblock Differential geometry of smooth families of probability
  distributions.
\newblock Technical Report METR 82--7, University of Tokyo, 1982.

\bibitem{naudts2011generalised}
J.~Naudts.
\newblock {\em Generalised Thermostatistics}.
\newblock Springer, 2011.

\bibitem{NY83}
A.~S. Nemirovsky and D.~B. Yudin.
\newblock {\em Problem Complexity and Method Efficiency in Optimization}.
\newblock Wiley, 1983.

\bibitem{NNA15}
R.~Nock, F.~Nielsen, and S.-i. Amari.
\newblock On conformal divergences and their population minimizers.
\newblock {\em IEEE Transactions on Information Theory}, 62(1):527--538, 2015.

\bibitem{pw16}
S.~Pal and T.-K.~L. Wong.
\newblock The geometry of relative arbitrage.
\newblock {\em Mathematics and Financial Economics}, 10(3):263--293, 2016.

\bibitem{pw18}
S.~Pal and T.-K.~L. Wong.
\newblock Exponentially concave functions and a new information geometry.
\newblock {\em The Annals of Probability}, 46(2):1070--1113, 2018.

\bibitem{pw18b}
S.~Pal and T.-K.~L. Wong.
\newblock Multiplicative {S}chr\"{o}odinger problem and the {D}irichlet
  transport.
\newblock {\em Probability Theory and Related Fields}, 178(1):613--654, 2020.

\bibitem{rm15}
G.~Raskutti and S.~Mukherjee.
\newblock The information geometry of mirror descent.
\newblock {\em IEEE Transactions on Information Theory}, 61(3):1451--1457,
  2015.

\bibitem{shima2007geometry}
H.~Shima.
\newblock {\em The Geometry of Hessian Structures}.
\newblock World Scientific, 2007.

\bibitem{10.1214/aoms/1177703591}
R.~Sinkhorn.
\newblock {A Relationship Between Arbitrary Positive Matrices and Doubly
  Stochastic Matrices}.
\newblock {\em The Annals of Mathematical Statistics}, 35(2):876 -- 879, 1964.

\bibitem{vh14}
T.~Van~Erven and P.~Harremos.
\newblock R{\'e}nyi divergence and {K}ullback-{L}eibler divergence.
\newblock {\em IEEE Transactions on Information Theory}, 60(7):3797--3820,
  2014.

\bibitem{v03}
C.~Villani.
\newblock {\em Topics in Optimal Transportation}.
\newblock American Mathematical Society, 2003.

\bibitem{v08}
C.~Villani.
\newblock {\em Optimal Transport: Old and New}.
\newblock Springer, 2008.

\bibitem{wibisono2016variational}
A.~Wibisono, A.~C. Wilson, and M.~I. Jordan.
\newblock A variational perspective on accelerated methods in optimization.
\newblock {\em Proceedings of the National Academy of Sciences},
  113(47):E7351--E7358, 2016.

\bibitem{wilson18}
A.~Wilson.
\newblock {\em Lyapunov Arguments in Optimization}.
\newblock PhD thesis, UC Berkeley, 2018.

\bibitem{w18}
T.-K.~L. Wong.
\newblock Logarithmic divergences from optimal transport and {R}\'{e}nyi
  geometry.
\newblock {\em Information Geometry}, 1(1):39--78, 2018.

\bibitem{w19}
T.-K.~L. Wong.
\newblock Information geometry in portfolio theory.
\newblock In {\em Geometric Structures of Information}, pages 105--136.
  Springer, 2019.

\bibitem{wy19}
T.-K.~L. Wong and J.~Yang.
\newblock Logarithmic divergences: geometry and interpretation of curvature.
\newblock In {\em International Conference on Geometric Science of
  Information}, pages 413--422. Springer, 2019.

\bibitem{WY19B}
T.-K.~L. Wong and J.~Yang.
\newblock Pseudo-{R}iemannian geometry encodes information geometry in optimal
  transport.
\newblock {\em Information Geometry}, 5:131--159, 2021.

\bibitem{WZ21}
T.-K.~L. Wong and J.~Zhang.
\newblock Tsallis and {R\'e}nyi deformations linked via a new
  $\lambda$-duality.
\newblock {\em IEEE Transactions on Information Theory}, 68(8):5353--5373,
  2022.

\bibitem{zhang2022lambda}
J.~Zhang and T.-K.~L. Wong.
\newblock $\lambda$-deformation: a canonical framework for statistical
  manifolds of constant curvature.
\newblock {\em Entropy}, 24(2):193, 2022.

\end{thebibliography}

\end{document}